\documentclass[a4paper]{article}
\usepackage{amsthm,amssymb,amsmath,enumerate,graphicx}


\newcommand{\comment}[1]{}

\newtheorem{proposition}{Proposition}[section]
\newtheorem{definition}[proposition]{Definition}
\newtheorem{theorem}[proposition]{Theorem}
\newtheorem{corollary}[proposition]{Corollary}
\newtheorem{lemma}[proposition]{Lemma}

\newtheorem{problem}{Problem}

\newcommand{\famun}{\bigcup}

\newcommand{\Debug}{0}

\ifnum \Debug = 1 
\else  \fi

\newcommand{\showFig}[4][0.8\linewidth]{
   \begin{figure}[htbp]
   \centering
   \noindent
   \includegraphics[width=#1]{#2}
   \caption{\small #4}
   \label{#3}
   \end{figure}
}

\newcommand{\N}{\mathbb N}
\newcommand{\R}{\mathbb R}

\newcommand{\oo}{\ensuremath{\omega}}
\newcommand{\OO}{\ensuremath{\Omega}}

\newcommand{\Lr}[1]{Lemma~\ref{#1}}
\newcommand{\Tr}[1]{Theorem~\ref{#1}}
\newcommand{\Sr}[1]{Section~\ref{#1}}


%
\newcommand{\assign}{
  \mathrel{\mathop{:}}=
}

\newcommand{\COMMENT}[1]{}

\newcommand{\emtext}[1]{\text{\em #1}}
\providecommand{\eps}{\varepsilon}
\providecommand{\NN}{\mathbb{N}}
\providecommand{\RR}{\mathbb{R}}
\providecommand{\CCC}{\mathcal{C}}

\providecommand{\PPP}{\mathcal{P}}

\title{\textbf{Geodetic topological cycles in locally finite graphs}}
\author{Agelos Georgakopoulos \and Philipp Spr\"ussel}
\date{}

\begin{document}

\maketitle

\begin{abstract}
We prove that the topological cycle space $\CCC(G)$ of a locally finite graph $G$ is generated by its geodetic topological circles. We further show that, although the finite cycles of $G$ generate $\CCC(G)$, its finite geodetic cycles need not generate $\CCC(G)$.
\end{abstract}

\section{Introduction}
\label{sec:intro}

A finite cycle $C$ in a graph $G$ is called \emph{geodetic} if, for any two vertices $x,y\in C$, the length of at least one of the two $x$--$y$~arcs on $C$ equals the distance between $x$ and $y$ in $G$. It is easy to prove (see \Sr{subsec:geo:finite}):

\begin{proposition}
\label{pro:fin:sim}
The cycle space of a finite graph is generated by its geodetic cycles.
\end{proposition}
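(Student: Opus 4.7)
The plan is to proceed by induction on the length (number of edges) of a cycle. Since the cycle space of a finite graph is generated by its cycles, it suffices to show that every cycle is a sum of geodetic cycles.

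As the base case, very short cycles (e.g.\ triangles) are automatically geodetic. For the inductive step, consider a cycle $C$ that is not geodetic. Then there exist two vertices $x,y\in C$ such that both $x$--$y$ arcs $A_1, A_2$ of $C$ have length strictly greater than $d_G(x,y)$. Choose any shortest $x$--$y$ path $P$ in $G$; by assumption,
\[
|E(P)| < |E(A_1)| \quad\text{and}\quad |E(P)| < |E(A_2)|.
\]
In the cycle space (over $\mathbb{F}_2$) we can then write
\[
E(C) \;=\; \bigl(E(A_1)\triangle E(P)\bigr) \;\triangle\; \bigl(E(A_2)\triangle E(P)\bigr),
\]
since $E(P)$ cancels with itself.

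Each set $D_i := E(A_i)\triangle E(P)$ has the property that every vertex is incident with an even number of its edges (being the symmetric difference of two $x$--$y$ walks), so $D_i$ lies in the cycle space and can be written as an edge-disjoint union of cycles. Crucially, each such cycle uses at most $|D_i| \le |E(A_i)| + |E(P)| < |E(A_1)| + |E(A_2)| = |E(C)|$ edges, so each is strictly shorter than $C$. By the induction hypothesis each of these shorter cycles is a sum of geodetic cycles, and combining the two decompositions expresses $E(C)$ as a sum of geodetic cycles, completing the induction.

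The only subtle point is that $A_i \cup P$ need not be a simple cycle, because the shortest path $P$ may meet $A_i$ at internal vertices; this is why the intermediate step of decomposing the even-degree edge set $D_i$ into proper cycles is needed. Once this decomposition is invoked (a standard fact about the cycle space of a finite graph), everything else is an elementary length comparison, and no further obstacle arises.
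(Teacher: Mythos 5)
Your proof is correct and follows essentially the same route as the paper's: split a non-geodetic cycle along a shortcut into two strictly shorter elements of the cycle space and recurse (the paper phrases this as a minimal counterexample with respect to length rather than induction on edge count). The only cosmetic difference is that the paper sidesteps the issue of $P$ meeting $C$ internally by choosing the shortcut to be internally disjoint from the cycle, so that $A_i\cup P$ is itself a cycle, whereas you allow an arbitrary shortest path and then decompose the resulting even-degree edge set --- both are fine.
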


Our aim is to generalise Proposition~\ref{pro:fin:sim} to the topological 
cycle space of locally finite infinite graphs. 
\medskip

The \emph{topological cycle space} $\CCC(G)$ of a locally finite graph $G$ was introduced by Diestel and K\"uhn \cite{cyclesI,cyclesII}. It is built not just from finite cycles, but also from infinite \emph{circles}: homeomorphic images of the unit circle $S^1$ in the topological space $|G|$ consisting of $G$, seen as a 1-complex, together with its ends. (See \Sr{sec:notation} for precise definitions.) 
This space $\CCC(G)$  has been shown \cite{locFinTutte,duality,partition,locFinMacLane,cyclesI,hotchpotch} to be the appropriate notion of the cycle space for a locally finite graph: it allows generalisations to locally finite graphs of most of the well-known theorems about the cycle space of finite graphs, theorems which fail for infinite graphs if the usual finitary notion of the cycle space is applied. It thus seems that the topological cycle space is an important object that merits further investigation. (See \cite{cyclesIntro,diestelBook05} for introductions to the subject.)

As in the finite case, one fundamental question is which natural subsets of the topological cycle space generate it, and how. It has been shown, for example, that the fundamental circuits of topological spanning trees do (but not those of arbitrary spanning trees) \cite{cyclesI}, or the non-separating induced cycles \cite{locFinTutte}, or that every element of $\CCC(G)$ is a sum of disjoint circuits \cite{cyclesII, diestelBook05, vvv}---a trivial observation in the finite case, which becomes rather more difficult for infinite~$G$. (A shorter proof, though still non-trivial, is given in \cite{hotchpotch}.) Another standard generating set for the cycle space of a finite graph is the set of geodetic cycles (Proposition~\ref{pro:fin:sim}), and it is natural to ask whether these still generate $\CCC(G)$  when $G$ is infinite.

But what is a geodetic topological circle? One way to define it would be to apply the standard definition, stated above before Proposition~\ref{pro:fin:sim}, to arbitrary circles, taking as the length of an arc the number of its edges (which may now be infinite). As we shall see, Proposition~\ref{pro:fin:sim}  will fail with this definition, even for locally finite graphs. Indeed, with hindsight we can see why it should fail: 
when $G$ is infinite then giving every edge length~$1$ will result in path lengths that distort rather than reflect the natural geometry of $|G|$: edges `closer to' ends must be shorter, if only to give paths between ends finite lengths.

It looks, then, as though the question of whether or not Proposition~\ref{pro:fin:sim} generalises might depend on how exactly we choose the edge lengths in our graph. However, our main result is that this is not the case: we shall prove that no matter how we choose the edge lengths, as long as the resulting arc lengths induce a metric compatible with the topology of $|G|$, the geodetic circles in $|G|$ will generate $\CCC(G)$. Note, however, that 
the question of which circles are geodetic does 
depend on our choice of edge lengths, even under the assumption that a metric compatible with the topology of $|G|$ is induced. 

If $\ell:E(G)\to\RR_+$ is an assignment of edge lengths that has the above property, we call the pair $(|G|,\ell)$ a \emph{metric representation of $G$}. We then call a circle $C$ \emph{$\ell$-geodetic} if for any points $x,y$ on $C$ the distance between $x$ and $y$ in $C$ is the same as the distance between $x$ and $y$ in $|G|$. See \Sr{MetRep} for precise definitions and more details.

We can now state the main result of this paper more formally:

\begin{theorem}
\label{thm:generating}
For every metric representation $(|G|,\ell)$ of a connected locally finite graph~$G$, the topological cycle space $\CCC(G)$ of $G$ is 
generated by the $\ell$-geodetic circles in~$G$.
\end{theorem}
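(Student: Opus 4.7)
The plan is to reduce to individual circles and then straighten each such circle by iterated shortcutting. Since every element of $\CCC(G)$ is a thin sum of circles (as cited in the introduction), it suffices to prove that every circle $C$ of $G$ lies in the subgroup $\mathcal{G}\leq\CCC(G)$ generated by the $\ell$-geodetic circles. So fix a circle $C$.

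The basic operation is the \emph{shortcutting} of a non-$\ell$-geodetic circle. Because $G$ is connected and locally finite, $|G|$ is compact, and because the metric induced by $\ell$ is compatible with this topology, $(|G|,\ell)$ is a compact length space; hence any two points of $|G|$ are joined by a length-realising geodesic arc. If $C$ is not $\ell$-geodetic, there are points $x,y\in C$ with $d_{|G|}(x,y)<d_C(x,y)$, so there is a geodesic arc $P\subseteq|G|$ of length strictly smaller than either of the two arcs $A_1,A_2$ of $C$ between $x$ and $y$. In the clean case $P\cap C=\{x,y\}$ the circles $C_i:=P\cup A_i$ satisfy $C=C_1+C_2$ in $\CCC(G)$ with $\ell(C_i)<\ell(C)$; in general the components of $P\setminus C$ yield an analogous decomposition of $C$ into a (possibly infinite, but thin) family of strictly shorter circles.

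The remaining task is to iterate shortcutting until only $\ell$-geodetic circles remain, and this is the step I expect to be the main obstacle. Unlike in the finite case of Proposition~\ref{pro:fin:sim}, the lengths of the circles produced need not decrease by any uniform positive amount, so a naive induction on $\ell(C)$ is not available. I would therefore organise the iteration as a transfinite recursion, maintaining at each ordinal stage a thin family $\mathcal{F}_\alpha$ of circles summing to $C$ in $\CCC(G)$: at successor stages one non-geodetic member is replaced by its shortcut decomposition, with the geodesic chord chosen greedily so as to force honest progress (for instance, requiring the gain $\min(\ell(A_1),\ell(A_2))-\ell(P)$ to exceed a prescribed threshold $1/n$ until no such chord survives, and then decreasing $n$); at limit stages one takes an appropriate thin limit of the earlier families. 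Convergence to a thin family of $\ell$-geodetic circles should then follow from a compactness argument in $(|G|,\ell)$: from any hypothetical persistently non-geodetic survivor one extracts a convergent subsequence of geodesic chords whose limit contradicts the greedy selection rule. Once the recursion halts, $C$ is expressed as a thin sum of $\ell$-geodetic circles, so $C\in\mathcal{G}$ and the theorem follows.
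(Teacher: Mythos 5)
Your very first reduction is already the step that fails, and the paper warns about it explicitly: it does \emph{not} suffice to show that every individual circle is a thin sum of $\ell$-geodetic circuits. An arbitrary element of $\CCC(G)$ is a thin sum of infinitely many circuits $D_1,D_2,\dotsc$, and even if each $D_k$ is a thin sum of a family $\FFF_k$ of geodetic circuits, the union $\bigcup_k\FFF_k$ need not be thin --- a single edge can be used by one member of $\FFF_k$ for every $k$ (the counterexample of Figure~\ref{fig:bsp:classic} is exactly of this type). So ``the subgroup generated by the $\ell$-geodetic circles'' is not closed under the infinite sums you need, and proving the statement for one circle at a time does not yield the theorem. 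Any correct proof must control \emph{which} geodetic circles are used for each piece; the paper does this by showing that a circle that first meets the ball $S_i$ around a fixed root can be generated by $\ell$-geodetic circles of length at most $5\eps_i$, where $\eps_i\to 0$, and this quantitative bound is what forces the combined family to be thin.

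The second gap is that your iteration --- the part you yourself flag as the main obstacle --- is not yet an argument. ``Take an appropriate thin limit of the earlier families at limit stages'' has no content as stated: limits of sequences of circles need not be circles (the paper exhibits a chain of cycles whose limit is an edge-disjoint union of two circles, Figure~\ref{fig:limitnocycle}), there is no reason the greedy threshold scheme terminates, and no mechanism is offered to prevent the families $\mathcal F_\alpha$ from accumulating infinitely many circles through a fixed edge as $\alpha$ grows. The paper sidesteps all of this by never shortcutting inside $|G|$ directly: it decomposes the \emph{finite} restrictions $C|\hat S_j$ into geodetic cycles of finite auxiliary graphs $\hat S_j$ (where the shortcutting induction of Theorem~\ref{thm:fin:L} is legitimate), shows that restriction preserves geodecity, and then uses K\"onig's Infinity Lemma to extract chains whose limits are genuine $\ell$-geodetic circles (Lemma~\ref{lemma:limitgeo}). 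Your preliminary observations are fine as far as they go --- $(|G|,\ell)$ is indeed a compact geodesic space, and shortest arcs exist (Proposition~\ref{lemma:shortestarc}) --- but the two gaps above are precisely where the real difficulty of the theorem lives.
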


Motivated by the current work, the first author initiated a more systematic study of topologies on graphs that can be induced by assigning lengths to the edges of the graph. In this context, it is conjectured that \Tr{thm:generating} generalises to arbitrary compact metric spaces if the notion of the topological cycle space is replaced by an analogous homology \cite{ltop}.

We prove \Tr{thm:generating} in \Sr{sec:proof}, after giving the required definitions and basic facts in \Sr{sec:notation} and showing that Proposition~\ref{pro:fin:sim} holds for finite graphs but not for infinite ones in \Sr{sec:geo}. Finally, in \Sr{sec:further} we will discuss some further problems.

\section{Definitions and background}
\label{sec:notation}

\subsection{The topological space $|G|$ and $\CCC(G)$}\label{subsec:notation:infinite}

Unless otherwise stated, we will be using the terminology of \cite{diestelBook05} for graph-theoretical concepts and that of \cite{armstrong} for topological ones. Let $G=(V,E)$ be a \emph{locally finite} graph --- i.e.\ every vertex has a finite degree --- finite or infinite, fixed throughout this section.

The graph-theoretical distance between two vertices $x,y\in V$, is the minimum $n\in \N$ such that there is an $x$--$y$~path in $G$ comprising $n$ edges. Unlike the frequently used convention, we will not use the notation $d(x,y)$ to denote the graph-theoretical distance, as we use it to denote the distance with respect to a metric $d$ on $|G|$.

A $1$-way infinite path is called a \emph{ray}, a $2$-way infinite path is a \emph{double ray}. A \emph{tail of} a ray $R$ is an infinite subpath of $R$. Two rays $R,L$ in $G$ are \emph{equivalent} if no finite set of vertices separates them. The corresponding equivalence classes of rays are the \emph{ends} of $G$. We denote the set of ends of $G$ by $\Omega = \Omega(G)$, and we define $\hat{V}\assign V\cup \OO$.

Let $G$ bear the topology of a 1-complex, where the 1-cells are real intervals of arbitrary lengths\footnote{Every edge is homeomorphic to a real closed bounded interval, the basic open sets around an inner point being just the open intervals on the edge. The basic open neighbourhoods of a vertex $x$ are the unions of half-open intervals $[x, z)$, one from every edge $[x, y]$ at $x$. Note that the topology does not depend on the lengths of the intervals homeomorphic to edges.}. To extend this topology to \OO, let us define for each end $\oo \in \OO$ a basis of open neighbourhoods. Given any finite set $S \subset V$, let $C = C(S, \oo)$ denote the component of $G - S$ that contains some (and hence a tail of every) ray in $\oo$, and let $\OO(S,\oo)$ denote the set of all ends of $G$ with a ray in $C(S, \oo)$. As our basis of open neighbourhoods of $\oo$ we now take all sets of the form
\begin{equation}
C(S, \oo) \cup \OO(S,\oo) \cup E'(S,\oo)
\label{eq:open}
\end{equation}
where $S$ ranges over the finite subsets of $V$ and $E'(S,\oo)$ is any union of half-edges $(z, y]$, one for every $S$--$C$ edge $e = xy$ of $G$, with $z$ an inner point of $e$. Let $|G|$ denote the topological space of $G \cup \OO$ endowed with the topology generated by the open sets of the form~\eqref{eq:open} together with those of the 1-complex $G$. It can be proved (see \cite{ends}) that in fact $|G|$ is the Freudenthal compactification \cite{Freudenthal31} of the 1-complex $G$.

A continuous map $\sigma$ from the real unit interval $[0,1]$ to $|G|$ is a \emph{topological path} in $|G|$; the images under $\sigma$ of $0$ and $1$ are its \emph{endpoints}. A homeomorphic image of the real unit interval in $|G|$ is an \emph{arc} in $|G|$. Any set $\{x\}$ with $x\in|G|$ is also called an arc in $|G|$. A homeomorphic image of $S^1$, the unit circle in $\R^2$, in $|G|$ is a (\emph{topological cycle} or) \emph{circle} in $|G|$. Note that any arc, circle, cycle, path, or image of a topological path is closed in $|G|$, since it is a continuous image of a compact space in a Hausdorff space.

A subset $D$ of $E$ is a \emph{circuit} if there is a circle $C$ in $|G|$ such that $D=\{e \in E \mid e\subseteq C\}$. Call a family $\mathcal F=(D_i)_{i \in I}$ of subsets of $E$ \emph{thin} if no edge lies in $D_i$ for infinitely many indices $i$. Let the \emph{(thin) sum} $\sum \mathcal F$ of this family be the set of all edges that lie in $D_i$ for an odd number of indices $i$, and let the \emph{topological cycle space} ${\mathcal C}(G)$ of $G$ be the set of all sums of thin families of circuits. In order to keep our expressions simple, we will, with a slight abuse, not stricly distinguish circles, paths and arcs from their edge sets.

\subsection{Metric representations} \label{MetRep}

Suppose that the lengths of the $1$-cells (edges) of the locally finite graph $G$ are given by a function $\ell:E(G)\to\RR_+$. Every arc in $|G|$ is either a subinterval of an edge or the closure of a disjoint union of open edges or half-edges (at most two, one at either end), and we define its \emph{length} as the length of this subinterval or as the (finite or infinite) sum of the lengths of these edges and half-edges, respectively. Given two points $x,y \in |G|$, write $d_{\ell}(x,y)$ for the infimum of the lengths of all $x$--$y$~arcs in~$|G|$. It is straightforward to prove:

\begin{proposition}\label{d_metric}
  If for every two points $x,y \in |G|$ there is an $x$-$y$~arc of finite length, then $d_{\ell}$ is a metric on $|G|$.
\end{proposition}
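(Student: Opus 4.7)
The plan is to check the defining properties of a metric — non-negativity and finiteness, $d_\ell(x,x)=0$, positivity, symmetry, and the triangle inequality — with positivity as the only substantive step. Non-negativity is immediate, finiteness is the hypothesis, $d_\ell(x,x)=0$ is witnessed by the trivial arc $\{x\}$, and symmetry holds because reversing the parameterization of an $x$--$y$ arc yields a $y$--$x$ arc of the same length.

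For the triangle inequality, given an $x$--$y$ arc $A$ and a $y$--$z$ arc $B$, I would extract an $x$--$z$ arc of length at most $\ell(A)+\ell(B)$ from $A\cup B$ as follows. Since $B$ is compact and $|G|$ is Hausdorff, $B$ is closed in $|G|$, so $\{t\in[0,1]:A(t)\in B\}$ is closed and non-empty (it contains $1$); let $t^*$ be its minimum and set $p:=A(t^*)=B(s^*)$ for some $s^*\in[0,1]$. Concatenating $A|_{[0,t^*]}$ with $B|_{[s^*,1]}$ gives an $x$--$z$ arc (injectivity holds because $A(t)\notin B$ for $t<t^*$) whose length is at most $\ell(A)+\ell(B)$. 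Taking infima gives $d_\ell(x,z)\le d_\ell(x,y)+d_\ell(y,z)$.

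The heart of the proof is positivity: given $x\neq y$, I will exhibit $\delta>0$ such that every $x$--$y$ arc has length at least $\delta$. The strategy is to choose, using the Hausdorff property of $|G|$, a basic open neighbourhood $U$ of $x$ with $y\notin U$ whose frontier in $|G|$ is a finite set of inner points $z_1,\dots,z_k$ of edges — with finiteness forced by local finiteness of $G$. Three cases arise depending on the type of $x$: if $x$ is an inner point of an edge $e$, take $U$ to be a small sub-interval of $e$ around $x$; if $x$ is a vertex, take $U=\bigcup_{e\ni x}[x,z_e)$, a star of half-open intervals; and if $x$ is an end $\omega$, take $U=C(S,\omega)\cup\Omega(S,\omega)\cup E'(S,\omega)$ for some finite $S\subset V$ with $y\notin U$, noting that the set of $S$--$C$ edges, and hence the frontier $\{z_e\}$, is finite.

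In each case, any arc $A$ from $x$ to $y$ must leave $U$; letting $t_0:=\inf\{t:A(t)\notin U\}$, continuity yields $A(t_0)=z_{i_0}\in\partial U$ for some index $i_0$. The key local fact is that an inner point of an edge has neighbourhoods contained in that edge, so $A$ cannot leave an edge except through one of its endpoints; combined with injectivity and connectedness of $A([0,t_0])$, this forces $A([0,t_0])$ to coincide, up to reparameterization, with the straight edge-interval from $x$ (or from the appropriate edge-endpoint near $x$, in the end case) to $z_{i_0}$ inside a single edge. Choosing the $z_i$ so that each such edge-interval has length at least some positive $\delta$ — which is possible because there are only finitely many of them and each $\ell(e)>0$ — yields $\ell(A)\ge\delta$. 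The main obstacle is the end case, where one must unpack the basic neighbourhood to verify that the frontier is exactly the finite set $\{z_e\}$ and that the same local edge-interval argument applies; once this is set up, the bound follows uniformly.
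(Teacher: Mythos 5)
Your proof is correct and its substantive step, the triangle inequality, follows the same route as the paper's: concatenate an $x$--$z$ arc with a $z$--$y$ arc (the paper simply asserts that their union contains a short enough $x$--$y$ arc, which your first-hitting-time construction makes explicit). The positivity argument you treat as the heart of the proof is dispatched by the paper as one of the axioms that are ``trivially satisfied''; your local analysis of the frontier of a basic neighbourhood is a valid way to verify it, just more detail than the paper deems necessary.
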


\comment{
\begin{proof}
  We prove that $d_{\ell}$ satisfies the triangle inequality; the other axioms for a metric are trivially satisfied.
  
  Suppose, for contradiction, that there are points $x,y,z\in|G|$ with $d_{\ell}(x,y) > d_{\ell}(x,z) + d_{\ell}(z,y)$. Put $\eps\assign d_{\ell}(x,y) - \big(d_{\ell}(x,z) + d_{\ell}(z,y)\big) > 0$. By definition of $d_{\ell}$ there is an $x$--$z$~arc shorter than $d_{\ell}(x,z) + \eps/2$. Likewise, there is a $z$--$y$~arc shorter than $d_{\ell}(z,y) + \eps/2$. The union of these two arcs contains an $x$--$y$~arc shorter than $d_{\ell}(x,y)$, a contradiction.
\end{proof}
}

This metric $d_{\ell}$ will in general not induce the topology of $|G|$. If it does, we call $(|G|,\ell)$ a \emph{metric representation} of~$G$ (other topologies on a graph that can be induced by edge lengths in a similar way are studied in \cite{ltop}). We then call a circle $C$ in $|G|$ \emph{$\ell$-geodetic} if, for every two points $x,y\in C$, one of the two $x$--$y$~arcs in $C$ has length~$d_{\ell}(x,y)$. If $C$ is $\ell$-geodetic, then we also call its circuit \emph{$\ell$-geodetic}.

Metric representations do exist for every locally finite graph $G$. Indeed, pick a normal spanning tree $T$ of $G$ with root $x\in V(G)$ (its existence is proved in \cite[Theorem 8.2.4]{diestelBook05}), and define the length $\ell(uv)$ of any edge $uv\in E(G)$ as follows. If $uv \in E(T)$ and $v \in xTu$, let $\ell(uv)= 1/2^{|xTu|}$. If $uv \notin E(T)$, let $\ell(uv)=\sum_{e \in uTv} \ell(e)$. It is easy to check that $d_\ell$ is a metric of $|G|$ inducing its topology \cite{diestelESST}.

\subsection{Basic facts}

In this section we give some basic properties of $|G|$ and $\CCC(G)$ that we will need later.

One of the most fundamental properties of $\CCC(G)$ is that:

\begin{lemma}[\cite{cyclesII}]
  \label{pro:cyclespace:circles}
  For any locally finite graph $G$, every element of $\CCC(G)$ is an edge-disjoint sum of circuits.
\end{lemma}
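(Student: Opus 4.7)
The plan is a standard Zorn's-lemma argument combined with a topological ``trail extraction'' step. Given $Z \in \CCC(G)$, consider the poset of all families of pairwise edge-disjoint circuits whose union of edge sets is contained in $Z$, ordered by inclusion. Edge-disjointness is preserved under unions of chains, so Zorn's lemma yields a maximal such family $\mathcal{D}$. Any edge-disjoint family of circuits is trivially thin, so $\sum \mathcal{D} \in \CCC(G)$; by closure of $\CCC(G)$ under thin sums, $Z' := Z + \sum \mathcal{D} = Z \setminus \bigcup \mathcal{D}$ also lies in $\CCC(G)$. It then suffices to prove $Z' = \emptyset$, for $Z$ will be the edge-disjoint union of the circuits in $\mathcal{D}$.

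Suppose for contradiction $e_0 \in Z'$. The heart of the argument is the sub-claim that any non-empty element of $\CCC(G)$ contains a circuit; adjoining such a circuit to $\mathcal{D}$ will contradict maximality. To prove the sub-claim I would first record that every vertex has even, finite degree in $Z'$: by local finiteness and thinness, only finitely many circles in any defining thin family can pass through a given $v$, and each such circle uses an even number of edges at $v$ (since by compactness a circle visits each vertex only finitely often, consuming two edges per visit). Starting at $e_0$, I would inductively build a trail in $Z'$, extending at each endpoint through an unused $Z'$-edge guaranteed by the even-degree condition. If the trail ever closes, we have a finite cycle and are done; otherwise it extends to a double ray $R$ in $Z'$.

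The main obstacle is converting $R$ into a topological circle. By compactness of $|G|$ each tail of $R$ has accumulation points in $\Omega$; ideally both tails should converge to a single common end $\omega$, in which case $\overline R \subset |G|$ is a homeomorphic image of $S^1$ and we are done. The difficulty is that a tail could oscillate between distinct ends, or the two tails could converge to different ends, in which case $\overline R$ is not a circle. To overcome this I would exploit the complementary characterisation that $Z' \in \CCC(G)$ meets every finite edge cut in an even number of edges: given a descending sequence of end-neighbourhoods $C(S_n,\omega)$, the even-cut condition guarantees that whenever the trail leaves $C(S_n,\omega)$ there is a further $Z'$-edge by which it must return, so with suitable foresight in the choice of extensions both tails can be steered into a common $\omega$. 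A clean implementation along these lines is essentially the topological Euler-tour construction of \cite{cyclesII}; this last step is where all the technical work lies, the Zorn-style framework above being essentially routine.
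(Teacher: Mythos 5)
The paper offers no proof of this lemma at all: it is imported from \cite{cyclesII}, and the introduction explicitly flags it as ``a trivial observation in the finite case, which becomes rather more difficult for infinite $G$''. So your proposal has to be judged on its own. The Zorn framework, the reduction to the sub-claim that every non-empty element of $\CCC(G)$ contains a circuit, and the even-degree observation at vertices are all sound. The gap is in the final step, and it is not merely a technical difficulty to be finessed: the strategy of extracting a single double ray $R$ and closing it up to a circle cannot work. A circle in $|G|$ may pass through infinitely many (even uncountably many) ends; its circuit then spans a subgraph in which every vertex has degree exactly $2$ and whose components are infinitely many double rays, each of whose two tails converges to a \emph{distinct} end (if both tails of one component converged to the same end, its closure would be a circle properly contained in the original circle, which is impossible for homeomorphic copies of $S^1$). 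Now take $Z'$ to be such a ``wild'' circuit, as constructed by Diestel and K\"uhn in a graph whose ends form a Cantor set. Since every vertex of $Z'$ has degree exactly $2$, your trail construction is completely forced --- there are no choices left to ``steer'' --- and it outputs one of these double rays, whose closure is an arc (the double ray plus two distinct ends), not a circle. The even-cut condition does guarantee that $Z'$ crosses each cut around an end-neighbourhood evenly, but the returning edges need not be reachable by continuing \emph{your} trail, so it cannot rescue the construction.

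The sub-claim itself is true (in the example above $Z'$ is literally a circuit), but any correct proof must be prepared to assemble one circle out of infinitely many pairwise disjoint double rays together with the ends in their closure; this gluing is exactly where the real work in \cite{cyclesII} (and in the later, shorter proofs) lies, and it is absent from your sketch. As a secondary point, you also invoke closure of $\CCC(G)$ under thin sums to conclude $Z+\sum\mathcal{D}\in\CCC(G)$; with the definition used here ($\CCC(G)$ is the set of thin sums of circuits) this closure property is itself a non-trivial theorem of \cite{cyclesI,cyclesII}, so it should at least be cited rather than treated as immediate.
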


As already mentioned, $|G|$ is a compactification of the $1$-complex $G$:

\begin{lemma}[{\cite[Proposition~8.5.1]{diestelBook05}}]
  \label{pro:compact}
  If $G$ is locally finite and connected, then $|G|$ is a compact Hausdorff space.
\end{lemma}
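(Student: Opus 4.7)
The plan is to prove the two properties separately. For Hausdorffness I would do a case analysis on two distinct points $p, q \in |G|$. Pairs consisting of inner edge points or vertices are already separated by the 1-complex topology of $G$. For two distinct ends $\omega_1 \ne \omega_2$, the definition of an end provides a finite vertex set $S$ such that rays in $\omega_1$ and $\omega_2$ lie in different components $C(S,\omega_i)$ of $G-S$; basic open neighborhoods of the form~\eqref{eq:open} can then be made disjoint by choosing the half-edges $E'(S,\omega_i)$ on each component's side of every $S$-incident edge. An end and a vertex (or inner edge point) are separated similarly, enlarging $S$ to contain that vertex or the endpoints of that edge.

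For compactness I would argue by contradiction via König's lemma. Since $G$ is connected and locally finite, $V$ is countable; enumerate $V = \{v_1, v_2, \dots\}$ and put $S_n = \{v_1, \dots, v_n\}$. Only finitely many edges are incident to $S_n$, so $G - S_n$ has finitely many components. Let $A_n$ be the compact ``core'' consisting of $S_n$ together with a closed half of each edge incident to $S_n$; then $|G| \setminus A_n$ is the disjoint union of finitely many open ``tails'' $\hat C_n^{(1)}, \dots, \hat C_n^{(k_n)}$, one for each component $C_n^{(i)}$ of $G - S_n$, each of the form $C_n^{(i)} \cup \Omega(S_n, C_n^{(i)}) \cup E'(S_n, C_n^{(i)})$ for a suitable choice of half-edges.

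Now let $\mathcal U$ be an open cover of $|G|$ with no finite subcover. Each $A_n$ is compact (a finite graph), so $\mathcal U$ admits a finite subfamily covering $A_n$, and hence some tail $\hat C_n^{(i_n)}$ is not finitely covered by $\mathcal U$. Refinement at consecutive levels makes the set of such ``bad'' tails into a locally finite rooted tree with infinitely many nodes, so König's lemma yields a nested sequence $\hat C_1 \supseteq \hat C_2 \supseteq \dots$ of bad tails. The corresponding graph components $C_1 \supseteq C_2 \supseteq \dots$ share a common ray, constructed step by step using local finiteness, and this ray defines an end $\omega$. Pick $U \in \mathcal U$ with $\omega \in U$ and, inside $U$, a basic neighborhood $B = C(S,\omega) \cup \Omega(S,\omega) \cup E'(S,\omega)$. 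For $n$ so large that $S_n \supseteq S$ we have $C_n \subseteq C(S,\omega)$ and $\Omega(S_n,C_n) \subseteq \Omega(S,\omega)$, so $\hat C_n \subseteq B$ except possibly for finitely many tiny half-edge ``slivers'' near $S$-incident edges; these can be absorbed into finitely many members of $\mathcal U$ covering a slightly enlarged $A_n$. Thus $\hat C_n$ is finitely covered, a contradiction.

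The main obstacle is the bookkeeping at the boundary between the core $A_n$ and the tails $\hat C_n^{(i)}$: one must choose cut points on $S_n$-incident edges consistently so that the tails are open and pairwise disjoint, and verify that a basic neighborhood of $\omega$ absorbs $\hat C_n$ up to finitely many slivers once $S_n \supseteq S$. Once this setup is in place, the König's lemma step is routine.
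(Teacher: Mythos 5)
Your argument is correct and is essentially the standard proof of this fact: the paper itself gives no proof but cites Diestel's Proposition~8.5.1, whose argument is exactly your two-step scheme (Hausdorffness by separating ends via finite cutsets, compactness by assuming a cover with no finite subcover, descending through the finitely many non-finitely-covered components of $G-S_n$ via K\"onig's lemma to an end, and contradicting that a basic neighbourhood of that end plus a compact remainder is finitely covered). The only points needing care --- that a ``bad'' tail must correspond to an infinite component, the step-by-step construction of the common ray, and absorbing the finitely many half-edge slivers --- are all routine and correctly flagged in your sketch.
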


The next statement follows at once from \Lr{pro:compact}.

\begin{corollary}
  \label{cor:closure}
  If $G$ is locally finite and connected, then the closure in $|G|$ of an infinite set of vertices contains an end.
\end{corollary}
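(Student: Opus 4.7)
The plan is to use compactness to produce an accumulation point of the infinite vertex set, and then rule out that this accumulation point is an ordinary point of the $1$-complex.

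Let $U \subseteq V$ be an infinite set of vertices. By \Lr{pro:compact}, $|G|$ is a compact Hausdorff space, and it is a standard fact that every infinite subset of a compact Hausdorff space has an accumulation point. Let $p \in |G|$ be an accumulation point of $U$; then $p \in \overline{U}$, so it suffices to show that $p \in \OO$.

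To rule out the remaining possibilities, I would examine the basic open neighbourhoods used to define the topology on $|G|$. If $p$ were an inner point of some edge $e$, then $p$ would have a basic open neighbourhood consisting of an open subinterval of $e$, which contains no vertices at all, contradicting the fact that every neighbourhood of $p$ meets $U$ in infinitely many points. If $p$ were a vertex $v$, then by the description of basic neighbourhoods of vertices in the $1$-complex topology (unions of half-open intervals $[v,z)$, one from each edge at $v$), there would exist a neighbourhood of $v$ containing no vertex other than $v$ itself; since $v$ is an accumulation point of $U$, every neighbourhood of $v$ must contain a point of $U \setminus \{v\}$, again a contradiction. Hence $p$ must be an end, and $p \in \overline{U} \cap \OO$.

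The only nontrivial ingredient is the appeal to compactness to obtain an accumulation point; everything else is a direct unwinding of the basic-neighbourhood definitions from \Sr{subsec:notation:infinite}, which is why the corollary is claimed to follow ``at once'' from \Lr{pro:compact}.
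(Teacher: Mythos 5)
Your proof is correct and is precisely the argument the paper intends when it says the corollary ``follows at once'' from \Lr{pro:compact}: compactness yields an accumulation point of the infinite vertex set, and the basic neighbourhoods of edge-interior points and of vertices exclude everything but an end. No gaps; the appeal to limit-point compactness and the case analysis on the $1$-complex neighbourhoods are both sound.
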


The following basic fact can be found in \cite[p. 208]{elemTop}.

\begin{lemma}
\label{arc}
The image of a topological path with endpoints $x,y$ in a Hausdorff space $X$ contains an arc in $X$ between $x$ and $y$.
\end{lemma}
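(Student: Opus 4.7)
Since the lemma is a classical fact from point-set topology, my plan is to follow the standard Hausdorff-style argument: find a minimal subset of $\sigma([0,1])$ that still connects $x$ to $y$ by a path, and argue that such a minimal set must be an arc. First, I would dispose of the trivial case $x = y$, for which the singleton $\{x\}$ is an arc under the convention stated just before the lemma. So assume $x \neq y$, write $\sigma$ for the topological path, and let $P := \sigma([0,1])$, which is compact (as the continuous image of $[0,1]$) and contains both $x$ and $y$.

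Next I would set up a Zorn's lemma argument on the poset $\mathcal{K}$ of closed subsets $K \subseteq P$ with $x,y \in K$ such that there exists a continuous map $\tau\colon [0,1] \to K$ with $\tau(0)=x$ and $\tau(1)=y$, ordered by inclusion. Since $P$ itself lies in $\mathcal{K}$, the poset is nonempty. To apply Zorn, I would show that every descending chain $(K_i)_{i\in I}$ in $\mathcal{K}$ admits a lower bound: the intersection $K^* := \bigcap_i K_i$ is closed in $P$, nonempty by the finite intersection property, and must still admit a path from $x$ to $y$. One way to see this is to view the chosen paths $\tau_i$ with image in $K_i$ as points of the space of continuous maps $[0,1] \to P$, and use compactness (after passing to the metrizable setting, since $P$, being a continuous image of $[0,1]$, is a Peano continuum) to extract a convergent subnet whose limit is a continuous $x$-to-$y$ path with image in $K^*$. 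A minimal element $K^*$ of $\mathcal{K}$ then exists by Zorn.

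Finally I would show that such a minimal $K^*$ is an arc. By the Hahn--Mazurkiewicz theorem, $K^*$ is a Peano continuum (compact, connected, locally connected, metrizable). By minimality, every point $p \in K^* \setminus \{x,y\}$ must separate $x$ from $y$ in $K^*$; otherwise one could shave off a small closed neighbourhood of $p$ to obtain a proper closed subset of $K^*$ that still admits an $x$--$y$~path, contradicting minimality. A Peano continuum with exactly two non-cut points is known to be homeomorphic to $[0,1]$, so $K^*$ is the desired arc between $x$ and $y$.

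The main obstacle is the verification of Zorn's hypothesis, namely that the intersection of a descending chain of $x$-to-$y$-path-connected closed sets is itself $x$-to-$y$-path-connected. This requires a compactness argument on a function space rather than merely on $P$ itself, and is the delicate step of the argument; the other parts are either routine or appeals to well-known structural results about Peano continua.
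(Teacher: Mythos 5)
The paper does not actually prove this lemma---it is quoted as a standard fact with a reference to the literature---so your proposal has to stand on its own, and unfortunately it breaks at exactly the step you identify as delicate. The hypothesis of Zorn's lemma fails for your poset $\mathcal{K}$: it is \emph{not} true that the intersection of a descending chain of compact sets, each admitting an $x$--$y$~path, still admits an $x$--$y$~path. Take $P=[0,1]^2$ (a continuous image of $[0,1]$, hence a legitimate instance of the lemma), $x=(0,0)$, $y=(1,\sin 1)$, and
$K_n=\bigl([0,\tfrac1n]\times[-1,1]\bigr)\cup\{(t,\sin(1/t)) : \tfrac1n\le t\le 1\}$.
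Each $K_n$ is closed, lies in $\mathcal{K}$ (go through the rectangle to $(\tfrac1n,\sin n)$ and then along the curve), and $K_{n+1}\subseteq K_n$; but $\bigcap_n K_n$ is the closed topologist's sine curve, which contains no path from $x$ to $y$. So this chain has no lower bound in $\mathcal{K}$. The function-space compactness you invoke to rescue the chain condition does not exist: $C([0,1],P)$ with the compact-open (equivalently, uniform) topology is not compact, the paths $\tau_i$ need not be equicontinuous, so Arzel\`a--Ascoli gives nothing, and a pointwise-convergent subnet of continuous maps need not have a continuous limit. The same example shows that the more classical-looking variant---Zorn on sub\emph{continua} containing $x$ and $y$, where nested intersections do stay connected---does not help either: a minimal such subcontinuum is merely an irreducible continuum, and irreducible continua (again the sine curve) need not be arcs.

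Your concluding step would be essentially fine \emph{if} a minimal element existed: such a $K^*$ must coincide with the image of one of its witnessing paths (otherwise that image is a smaller element), hence is a Peano continuum; the shaving argument then shows every point of $K^*\setminus\{x,y\}$ is a cut point, and a nondegenerate metric continuum with exactly two non-cut points is an arc. (Note that Hahn--Mazurkiewicz does not apply to an arbitrary closed subset of $P$; you need the preceding observation.) But producing the minimal element is where the entire content of the lemma sits, and it cannot be done by your chain argument. A correct elementary proof applies Zorn not to subsets of the image but to closed sets $A\subseteq[0,1]$ with $0,1\in A$ such that $\sigma(a)=\sigma(b)$ for every component $(a,b)$ of $[0,1]\setminus A$; here descending intersections genuinely remain in the poset, and for a minimal $A$ one collapses each complementary gap to a point, obtaining a copy of $[0,1]$ (or a point) that $\sigma$ maps continuously and injectively---hence, by compactness and the Hausdorff property of $X$, homeomorphically---onto a subset of $\sigma([0,1])$ containing $x$ and $y$. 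Alternatively, quote arcwise connectedness of Peano continua, whose standard proof uses refining simple chains of connected open sets rather than a minimality argument. Either repair the Zorn step along these lines or simply cite the fact, as the paper does.
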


As a consequence, being linked by an arc is an equivalence relation on $|G|$; a set $Y\subset|G|$ is called \emph{arc-connected} if $Y$ contains an arc between any two points in $Y$. Every arc-connected subspace of $|G|$ is connected.  Conversely, we have:

\begin{lemma}[\cite{tst}]
  \label{pro:connected}
  If $G$ is a locally finite graph, then every closed connected subspace of $|G|$ is arc-connected.
\end{lemma}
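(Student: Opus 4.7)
The plan is to reduce to producing a topological path from $x$ to $y$ in $Y$: once such a $\sigma\colon[0,1]\to Y$ is built, \Lr{arc} immediately upgrades it to an arc. So fix $x,y\in Y$. By \Sr{MetRep} one can fix a metric representation $(|G|,\ell)$, which together with \Lr{pro:compact} makes $|G|$ a compact metric space. I would first pass to a minimal closed connected $Y'\subseteq Y$ with $\{x,y\}\subseteq Y'$, produced by Zorn's lemma on the collection of such subsets ordered by reverse inclusion; nonempty bounds on chains are furnished because a decreasing intersection of closed connected sets in a compact Hausdorff space is itself closed, nonempty, and connected.

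Now $Y'$ is a metric continuum, and the target is to show it is homeomorphic to $[0,1]$ with $x$ and $y$ as its endpoints. By the classical characterization of the arc among metric continua, it suffices to verify that every $p\in Y'\setminus\{x,y\}$ is a cut point of $Y'$, meaning $Y'\setminus\{p\}$ is disconnected, while $x$ and $y$ themselves are not cut points. I would handle this by a case split on the type of $p$. If $p$ is an interior point of an edge, then $|G|\setminus\{p\}$ splits into two open halves, and minimality of $Y'$ forces $Y'$ to meet each half, producing a separation of $Y'\setminus\{p\}$. If $p$ is a vertex, the local structure of $|G|$ at $p$ consists of finitely many edge-germs (by local finiteness of $G$), each providing a natural direction; minimality again forces $Y'$ to leave $p$ in at least two such directions, yielding a separation. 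If $p$ is an end, one exploits a shrinking sequence of basic open neighbourhoods of $p$ whose $d_\ell$-diameters tend to $0$ (available thanks to the metric representation) to isolate the portion of $Y'$ accumulating to $p$ and separate it from the rest. Symmetric versions of the same arguments show that $x$ and $y$ are non-cut points.

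The main obstacle is the end case. An end $p$ has no canonical two-sided structure like an edge interior point; instead it can be approached from arbitrarily many directions corresponding to rays in the various components of $G-S$ for $S$ ranging over finite vertex sets. Establishing that $Y'\setminus\{p\}$ really decomposes into clopen pieces requires more than local geometric intuition: the natural strategy is to argue by contradiction, supposing $Y'\setminus\{p\}$ connected and then extracting a proper closed connected subset of $Y'$ still containing both $x$ and $y$, violating minimality. Carrying this out relies on compactness of $|G|$, on $d_\ell$ inducing the topology of $|G|$, and on the end-neighbourhood basis described in \Sr{subsec:notation:infinite}. Some graph-theoretic structure is essential here, since closed connected subsets of general compact metric spaces can fail to be arc-connected, as the topologist's sine curve illustrates; so the argument must genuinely use that $|G|$ comes from a locally finite graph and not merely that it is a Peano continuum.
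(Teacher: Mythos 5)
This lemma is not proved in the paper at all: it is imported from~\cite{tst}, so there is no ``paper proof'' to match, and your attempt has to stand on its own. As it stands it does not. The global strategy --- pass by Zorn's lemma to a continuum $Y'\subseteq Y$ irreducible between $x$ and $y$, then invoke the characterization of the arc as the metric continuum with exactly two non-cut points --- is a legitimate route in principle, and your preliminary steps (compactness of $|G|$ via \Lr{pro:compact}, metrizability via a metric representation, connectedness of nested intersections of continua) are sound. But the entire content of the lemma is the verification that every $p\in Y'\setminus\{x,y\}$ is a cut point, and that verification is asserted rather than carried out. Even in the edge-interior and vertex cases, ``minimality forces $Y'$ to meet each half'' needs an actual argument (a boundary-bumping step to show $Y'$ contains a one-sided subinterval at $p$, and a separate argument that $Y'$ cannot approach $p$ from only one direction without violating minimality), and meeting both halves does not by itself yield a separation of $Y'\setminus\{p\}$ --- one must still rule out that $Y'$ reconnects the two sides away from $p$, which again requires using irreducibility, not just local structure.

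More seriously, the end case --- which you correctly identify as the main obstacle --- is exactly where the lemma could fail if $|G|$ behaved like a general continuum, and your text for it consists of naming a strategy (``argue by contradiction \dots extract a proper closed connected subset'') and listing the ingredients it ``relies on'' without producing the argument. This is the sine-curve scenario in disguise: in the closed topologist's sine curve, the irreducible continuum between an endpoint of the oscillating part and a point of the limit segment is the whole space, and the intermediate limit points are \emph{not} cut points, so the characterization theorem simply does not apply. To make your proof work you must show concretely that a minimal $Y'$ in $|G|$ cannot accumulate at an end $\omega$ from ``both sides'' while $Y'\setminus\{\omega\}$ stays connected; the natural tools are \Lr{lemma:shortdistance}-style estimates (diameters of the components of $|G|\setminus G[S_i]$ tend to $0$) together with the fact that $\hat V$ is totally disconnected, but until that deduction is written out the proof has a genuine hole at its only hard point. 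I would either complete the end case in detail or abandon the cut-point route in favour of a direct construction of an arc (e.g.\ as a limit of restrictions to the graphs $\hat S_i$, in the spirit of \Lr{lemma:continuous} and Proposition~\ref{lemma:shortestarc}), which is closer to how the cited source~\cite{tst} proceeds.
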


The following lemma is a standard tool in infinite graph theory.

\begin{lemma}[K\"onig's Infinity Lemma \cite{InfLemma}]
  \label{lemma:infinity}
  Let $V_0,V_1,\dotsc$ be an infinite sequence of disjoint non-empty finite sets, and let $G$ be a graph on their union. Assume that every vertex $v$ in a set $V_n$ with $n\ge 1$ has a neighbour in $V_{n-1}$. Then $G$ contains a ray $v_0v_1\dotsm$ with $v_n\in V_n$ for all $n$.
\end{lemma}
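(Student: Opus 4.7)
The plan is to build the ray $v_0 v_1 \dotsm$ greedily, level by level, using a pigeonhole-style ``infinite descent'' argument. First, I would orient the graph by fixing, for each vertex $v \in V_n$ with $n \geq 1$, one specific neighbour $\pi(v) \in V_{n-1}$ supplied by the hypothesis; this turns $\bigcup_n V_n$ into a rooted forest $F$ with roots in $V_0$ and parent map $\pi$ (any other edges of $G$ are irrelevant to the construction and may be discarded). Call a vertex $v \in V_n$ \emph{fat} if it has infinitely many descendants in $F$, equivalently, if for infinitely many $m \geq n$ there is some $w \in V_m$ whose chain of $\pi$-ancestors passes through $v$.

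Since each $V_n$ is finite but $\bigcup_n V_n$ is infinite, and every vertex has a unique ancestor in $V_0$, the pigeonhole principle yields a fat root $v_0 \in V_0$. I would then proceed inductively: supposing that $v_n \in V_n$ is fat, the children of $v_n$ in $F$ all lie in $V_{n+1}$ and are therefore finite in number, while every descendant of $v_n$ at level larger than $n$ lies below exactly one such child. A second application of the pigeonhole principle therefore produces a child $v_{n+1} \in V_{n+1}$ of $v_n$ that is again fat, and the induction continues indefinitely.

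The sequence $v_0, v_1, v_2, \dotsc$ produced this way satisfies $v_n \in V_n$ and $v_n v_{n+1} \in E(G)$, since $\pi(v_{n+1}) = v_n$. Because the sets $V_n$ are pairwise disjoint, the vertices $v_n$ are pairwise distinct, so the sequence really is a ray rather than a walk that revisits vertices. I do not anticipate any genuine obstacle: the argument is entirely finitary in spirit and rests on the single observation that an infinite set cannot be partitioned into finitely many finite pieces, applied once at each level. The only step requiring a sentence of care is the preservation of fatness in the inductive step, and this is exactly where that observation is invoked.
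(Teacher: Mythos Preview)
Your argument is correct and is essentially the standard proof of K\"onig's Infinity Lemma. Note, however, that the paper does not supply its own proof of this statement: it is quoted as a classical tool with a reference to the literature, so there is no in-paper proof to compare against.
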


\section{Generating $\boldsymbol{\CCC(G)}$ by geodetic cycles}
\label{sec:geo}

\subsection{Finite graphs}
\label{subsec:geo:finite}

In this section finite graphs, like infinite ones, are considered as $1$-complexes where the $1$-cells (i.e.\ the edges) are real intervals of arbitrary lengths. Given a metric representation $(|G|,\ell)$ of a finite graph $G$, we can thus define the \emph{length} $\ell(X)$ of a path or cycle $X$ in $G$ by $\ell(X) = \sum_{e\in E(X)} \ell(e)$. Note that, for finite graphs, any assignment of edge lengths yields a metric representation. A cycle $C$ in $G$ is $\ell$-\emph{geodetic}, if for any $x,y \in V(C)$ there is no $x$--$y$~path in $G$ of length strictly less than that of each of the two $x$--$y$~paths on $C$.

The following theorem generalises Proposition~\ref{pro:fin:sim}.

\begin{theorem}
\label{thm:fin:L}
For every finite graph $G$ and every metric representation $(|G|,\ell)$ of $G$, every cycle $C$ of $G$ can be written as a sum of $\ell$-geodetic cycles of length at most $\ell(C)$.
\end{theorem}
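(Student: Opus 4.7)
My plan is to proceed by strong induction on $\ell(C)$. Since $G$ is finite there are only finitely many cycles, hence only finitely many possible values of $\ell(C)$, so the induction is well-founded. If $C$ is already $\ell$-geodetic (in particular, when $\ell(C)$ is minimum among cycles of $G$) there is nothing to prove: $C$ itself is the required one-term sum and its length equals~$\ell(C)$.

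For the inductive step, suppose $C$ is not $\ell$-geodetic. By the definition given in the excerpt, there exist vertices $x,y\in V(C)$ and an $x$--$y$~path $P$ in $G$ with $\ell(P)<\ell(P_i)$ for \emph{both} $x$--$y$~arcs $P_1,P_2$ of $C$. Set $C_i\assign P_i+P$ for $i\in\{1,2\}$, where $+$ denotes symmetric difference of edge sets. The symmetric difference of two $x$--$y$~paths has only even vertex degrees, so each $C_i$ is a member of the cycle space of $G$, and a direct computation gives
\[
C_1+C_2 \;=\; P_1+P_2 \;=\; C, \qquad \ell(C_i)\;\le\;\ell(P_i)+\ell(P)\;<\;\ell(P_i)+\ell(P_{3-i})\;=\;\ell(C),
\]
where possible sharing of edges between $P_i$ and $P$ only makes $\ell(C_i)$ smaller. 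Using the standard fact that every element of the cycle space of a finite graph is an edge-disjoint union of cycles, write $C_i=\sum_j D_{i,j}$ with the $D_{i,j}$ pairwise edge-disjoint cycles. Edge-disjointness yields $\sum_j \ell(D_{i,j})=\ell(C_i)<\ell(C)$, hence in particular every single $D_{i,j}$ satisfies $\ell(D_{i,j})<\ell(C)$.

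The induction hypothesis applies to each $D_{i,j}$ individually and writes it as a sum of $\ell$-geodetic cycles of length at most $\ell(D_{i,j})<\ell(C)$. Substituting these into $C=\sum_{i,j}D_{i,j}$ exhibits $C$ as a sum of $\ell$-geodetic cycles, every one of length strictly less than $\ell(C)$, and thus certainly at most $\ell(C)$.

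The main subtlety is that the auxiliary edge sets $C_1,C_2$ are generally \emph{not} simple cycles but only cycle-space elements; this is the reason for routing through the edge-disjoint cycle decomposition before invoking the induction hypothesis. The \emph{strict} inequality $\ell(P)<\min\{\ell(P_1),\ell(P_2)\}$ coming from the definition of non-$\ell$-geodetic (rather than just $<\max$) is precisely what guarantees both $\ell(C_1)$ and $\ell(C_2)$ are strictly less than $\ell(C)$, and hence that every cycle appearing after the decomposition is strictly shorter than $C$ and the induction makes progress.
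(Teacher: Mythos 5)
Your proof is correct and follows essentially the same strategy as the paper's: a shortcut path $P$ splits $C$ into two strictly shorter pieces summing to $C$, and one inducts on length (the paper phrases this as a minimal counterexample). The only difference is cosmetic: the paper chooses $P$ internally disjoint from $C$ so that $P\cup Q_1$ and $P\cup Q_2$ are themselves cycles, whereas you allow an arbitrary shortcut and pass through an edge-disjoint cycle decomposition of the cycle-space elements $P_i+P$, which neatly sidesteps having to justify that an internally disjoint shortcut exists.
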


\begin{proof}
Suppose that the assertion is false for some $(|G|,\ell)$, and let $D$ be a cycle in $G$ of minimal length among all cycles $C$ that cannot be written as a sum of $\ell$-geodetic cycles of length at most $\ell(C)$. As $D$ is not $\ell$-geodetic, it is easy to see that there is a path $P$ with both endvertices on $D$ but no inner vertex in $D$ that is shorter than the paths $Q_1$, $Q_2$ on $D$ between the endvertices of $P$. Thus $D$ is the sum of the cycles $D_1\assign P \cup Q_1$ and $D_2\assign P \cup Q_2$. As $D_1$ and $D_2$ are shorter than $D$, they are each a sum of $\ell$-geodetic cycles of length less than $\ell(D)$, which implies that $D$ itself is such a sum, a contradiction.
\end{proof}

By letting all edges have length $1$, \Tr{thm:fin:L} implies Proposition~\ref{pro:fin:sim}.

\subsection{Failure in infinite graphs}
\label{subsec:geo:infinite}

As already mentioned, Proposition~\ref{pro:fin:sim} does not naively generalise to locally finite graphs: there are locally finite graphs whose topological cycle space contains a circuit that is not a thin sum of circuits that are geodetic in the traditional sense, i.e.\ when every edge has length $1$. Such a counterexample is given in Figure~\ref{fig:bsp:classic}. The graph $H$ shown there is a subdivision of the \emph{infinite ladder}; the infinite ladder is a union of two rays $R_x=x_1x_2\dotsm$ and $R_y=y_1y_2\dotsm$ plus an edge $x_ny_n$ for every $n\in\NN$, called the \emph{$n$-th rung} of the ladder. By subdividing, for every $n\ge 2$, the $n$-th rung into $2n$ edges, we obtain $H$. For every $n\in\NN$, the (unique) shortest $x_n$--$y_n$~path contains the first rung $e$ and has length $2n-1$. As every circle (finite or infinite) must contain the subdivision of at least one rung, every geodetic circuit contains $e$. On the other hand, Figure~\ref{fig:bsp:classic} shows an element $C$ of $\CCC(H)$ that
contains infinitely many rungs.
As every circle can contain at most two rungs, we need an infinite family of 
geodetic circuits to generate $C$, but since they all have to contain $e$ the family cannot be thin.

The graph $H$ is however not a counterexample to \Tr{thm:generating}, since the constant edge lengths $1$ do not induce a metric of $|H|$.

\showFig{GeoHenning}{fig:bsp:classic}{A $1$-ended graph and an element of its topological cycle space (drawn thick) which is not the sum of a thin family of geodetic circuits.}

\section{Generating $\boldsymbol{\CCC(G)}$ by geodetic circles}
\label{sec:proof}

Let $G$ be an arbitrary connected locally finite graph, finite or infinite, consider a fixed metric representation $(|G|,\ell)$ of $G$ and write $d=d_{\ell}$. We want to assign a length to every arc or circle, but also to other objects like elements of $\CCC(G)$. To this end, let $X$ be an arc or circle in $|G|$, an element of $\CCC(G)$, or the image of a topological path in $|G|$. It is easy to see that for every edge $e$, $e \cap X$ is the union of at most two subintervals of $e$ and thus has a natural length which we denote by $\ell(e \cap X)$; moreover, $X$ is the closure in $|G|$ of $\bigcup_{e\in G}(e \cap X)$ (unless $X$ contains less than two points). We can thus define the \emph{length} of $X$ as $\ell(X)\assign\sum_{e\in G}\ell(e \cap X)$. 

Note that not every such $X$ has finite length (see \Sr{sec:further}). But the length of an $\ell$-geodetic circle $C$ is always finite. Indeed, as $|G|$ is compact, there is an upper bound $\eps_0$ such that $d(x,y) \leq \eps_0$ for all $x,y\in |G|$. Therefore, $C$ has length at most $2\eps_0$.

For the proof of Theorem~\ref{thm:generating} it does not suffice to prove that every circuit is a sum of a thin family of $\ell$-geodetic circuits. (Moreover, the proof of the latter statement turns out to be as hard as the proof of Theorem~\ref{thm:generating}.) For although every element $C$ of $\CCC(G)$ is a sum of a thin family of circuits (even of finite circuits, see \cite[Corollary~8.5.9]{diestelBook05}), representations of all the circles in this family as sums of thin families of $\ell$-geodetic circuits will not necessarily combine to a similar representation for $C$, because the union of infinitely many thin families need not be thin.

In order to prove Theorem~\ref{thm:generating}, we will use a sequence $\hat S_i$ of finite auxiliary graphs whose limit is $G$. Given an element $C$ of $\CCC(G)$ that we want to represent as a sum of $\ell$-geodetic circuits, we will for each $i$ consider an element $C | \hat S_i$ of the cycle space of $\hat S_i$ induced by $C$ --- in a way that will be made precise below --- and find a representation of $C | \hat S_i$ as a sum of geodetic cycles of $\hat S_i$, provided by \Tr{thm:fin:L}. We will then use the resulting sequence of representations and compactness to obtain a representation of $C$ as a sum of $\ell$-geodetic circuits.

\subsection{Restricting paths and circles}
\label{subsec:proof:restricting}

To define the auxiliary graphs mentioned above, pick a vertex $w \in G$, and let, for every $i\in\N$, $S_i$ be the set of vertices of $G$ whose graph-theoretical distance from $w$ is at most $i$; also let $S_{-1}=\emptyset$. Note that $S_0 = \{w\}$, every $S_i$ is finite, and $\bigcup_{i \in \N} S_i=V(G)$. For every $i\in\N$, define $\tilde S_i$ to be the subgraph of $G$ on $S_{i+1}$, containing those edges of $G$ that are incident with a vertex in $S_i$. Let $\hat S_i$ be the graph obtained from $\tilde S_i$ by joining every two vertices in $S_{i+1}-S_i$ that lie in the same component of $G-S_i$ with an edge; these new edges are the \emph{outer edges} of $\hat S_i$. For every $i\in\N$, a metric representation $(|\hat S_i|,\ell_i)$ can be defined as follows: let every edge $e$ of $\hat S_i$ that also lies in $\tilde S_i$ have the same length as in $|G|$, and let every outer edge $e=uv$ of $\hat S_i$ have length $d_\ell(u,v)$.
For any two points $x,y\in |\hat S_i|$ we will write $d_i(x,y)$ for $d_{\ell_i}(x,y)$ (the latter was defined at the end of \Sr{subsec:notation:infinite}). 
Recall that in the previous subsection we defined a length $\ell_i(X)$ for every   path, cycle, element of the cycle space, or image of a topological path $X$ in $|\hat S_i|$.

If $X$ is an arc with endpoints in $\hat V$ or a circle in $|G|$, define the \emph{restriction $X | \hat S_i$ of $X$ to $\hat S_i$} as follows. If $X$ avoids $S_i$, let $X | \hat S_i=\emptyset$. Otherwise, start with $E(X) \cap E(\hat S_i)$ and add all outer edges $uv$ of $\hat S_i$ such that $X$ contains a $u$--$v$~arc that meets $\hat S_i$ only in $u$ and $v$. We defined $X | \hat S_i$ to be an edge set, but we will, with a slight abuse, also use the same term to denote the subgraph of $\hat S_i$ spanned by this edge set. Clearly, the restriction of a circle is a cycle and the restriction of an arc is a path. For a path or cycles $X$ in $\hat S_j$ with $j>i$, we define the restriction $X | \hat S_i$ to $\hat S_i$ analogously.

\showFig[0.45\linewidth]{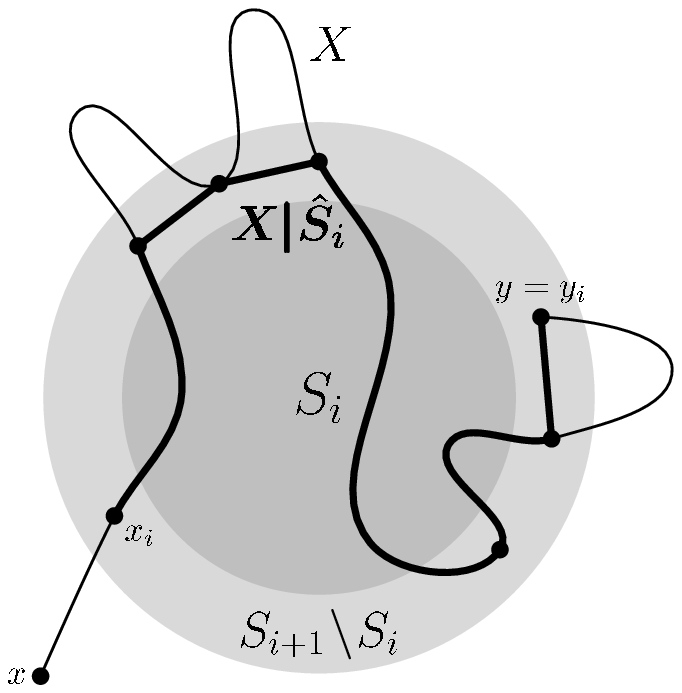}{fig:bsp:restriction}{The restriction of an $x$--$y$~arc $X$ to the $x_i$--$y_i$~path $X | \hat S_i$.}

Note that in order to obtain $X | \hat S_i$ from $X$, we deleted a set of edge-disjoint arcs or paths in $X$, and for each element of this set we put in $X | \hat S_i$ an outer edge with the same endpoints. As no arc or path is shorter than an outer edge with the same endpoints, we easily obtain:

\begin{lemma}
  \label{lemma:shorter}
  Let $i\in\N$ and let $X$ be an arc or a circle in $|G|$ (respectively, a path or cycle in $\hat S_j$ with $j>i$). Then $\ell_i(X | \hat S_i) \leq \ell(X)$ (resp.\ $\ell_i(X | \hat S_i) \leq \ell_j(X)$).
\end{lemma}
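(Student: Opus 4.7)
The plan is to decompose $X | \hat S_i$ into the edges it shares with $X$ and the outer edges that are added, and to bound each part by a corresponding edge-disjoint portion of $X$ (possibly omitting some parts of $X$ that do not survive the restriction, which only helps). The trivial case where $X$ avoids $S_i$ is handled directly: $X | \hat S_i = \emptyset$, so $\ell_i(X | \hat S_i) = 0 \leq \ell(X)$.

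For the main case, write $A := E(X) \cap E(\hat S_i)$, let $B$ denote the set of outer edges that are added, and for each $uv \in B$ let $X_{uv}$ denote the corresponding $u$--$v$ sub-arc of $X$ that meets $\hat S_i$ only at $u,v$. Using that $V(\hat S_i) = S_{i+1}$ is finite, and hence $X \cap S_{i+1}$ is a finite set dividing $X$ into sub-arcs, I would verify two structural facts: first, that each $X_{uv}$ can be taken to be a sub-arc between consecutive points of $X \cap S_{i+1}$ (any interior point in $S_{i+1}$ or interior edge of $\hat S_i$ would force a further visit to $V(\hat S_i)$ strictly between $u$ and $v$, contradicting consecutiveness); and second, that the edges of $A$ together with the sub-arcs $X_{uv}$ are pairwise edge-disjoint and their union is contained in $X$. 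From this, $\sum_{e \in A}\ell(e) + \sum_{uv \in B}\ell(X_{uv}) \leq \ell(X)$.

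The inequality then follows by a direct calculation:
\[
\ell_i(X | \hat S_i) = \sum_{e \in A} \ell_i(e) + \sum_{uv \in B} \ell_i(uv) = \sum_{e \in A} \ell(e) + \sum_{uv \in B} d_\ell(u, v) \leq \sum_{e \in A} \ell(e) + \sum_{uv \in B} \ell(X_{uv}) \leq \ell(X),
\]
where $\ell_i(uv) = d_\ell(u,v) \leq \ell(X_{uv})$ is immediate from the definition of $d_\ell$ as the infimum of lengths of $u$--$v$~arcs in $|G|$ applied to the arc $X_{uv}$. For a circle $C$, the decomposition uses the cyclic order on $C \cap V(\hat S_i)$ but is otherwise identical. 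For the $j > i$ case, the only new point is to show $\ell_j(X_{uv}) \geq d_\ell(u, v) = \ell_i(uv)$ for each outer edge $uv \in B$; this follows by summing the edgewise bound $\ell_j(e') \geq d_\ell(x,y)$ over edges $e' = xy$ of the $\hat S_j$-path $X_{uv}$ and applying the triangle inequality for $d_\ell$, where the edgewise bound holds by definition for outer edges of $\hat S_j$ and from $\ell(e') \geq d_\ell(x,y)$ for normal edges. The only real obstacle is this bookkeeping: carefully pinning down the structure of $X$ relative to the finite set $V(\hat S_i)$ and verifying the edge-disjointness claim. No step is conceptually hard.
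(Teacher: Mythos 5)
Your proof is correct and is essentially the paper's argument: the paper justifies this lemma with the single observation that the restriction replaces a set of edge-disjoint sub-arcs (or subpaths) of $X$ by outer edges with the same endpoints, and that no such sub-arc can be shorter than the corresponding outer edge, whose length is $d_\ell(u,v)$. You merely spell out the edge-disjoint decomposition and the triangle-inequality bookkeeping for the $j>i$ case, which the paper leaves implicit.
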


A consequence of this is the following:

\begin{lemma}
  \label{lemma:shestPath}
If $x,y \in S_{i+1}$ and $P$ is a shortest $x$--$y$~path in $\hat S_i$ with respect to $\ell_i$ then $\ell_i(P)=d(x,y)$.
\end{lemma}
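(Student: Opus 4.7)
The plan is to prove both inequalities $\ell_i(P) \geq d(x,y)$ and $\ell_i(P) \leq d(x,y)$ separately, using the fact that every outer edge $uv$ of $\hat S_i$ has $\ell_i$-length exactly $d(u,v)$ together with the restriction operation of \Sr{subsec:proof:restricting}.

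For the lower bound, fix $\varepsilon > 0$ and let $k$ be the number of outer edges of $P$. Since $\ell_i(uv) = d(u,v)$ is by definition the infimum of lengths of $u$-$v$~arcs in $|G|$, I can choose, for every outer edge $uv$ of $P$, a $u$-$v$~arc in $|G|$ of length at most $\ell_i(uv) + \varepsilon/k$. Concatenating these arcs with the ``real'' edges of $P$ (those already in $\tilde S_i$, which carry the same length in $|G|$ and in $\hat S_i$) in the order they appear along $P$ yields the image of a topological path in $|G|$ from $x$ to $y$ of total length at most $\ell_i(P) + \varepsilon$. By \Lr{arc} this image contains an $x$-$y$~arc, whose length is bounded by that of the topological path, so $d(x,y) \leq \ell_i(P) + \varepsilon$, and letting $\varepsilon \to 0$ gives $d(x,y) \leq \ell_i(P)$.

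For the upper bound, fix $\varepsilon > 0$ and, by the definition of $d = d_\ell$, pick an $x$-$y$~arc $A$ in $|G|$ with $\ell(A) \leq d(x,y) + \varepsilon$. If $A$ meets $S_i$, then $A | \hat S_i$ is an $x$-$y$~path in $\hat S_i$ (its endpoints are indeed $x,y$, as these already lie in $\hat S_i$), and by \Lr{lemma:shorter} it has $\ell_i$-length at most $\ell(A) \leq d(x,y) + \varepsilon$. Otherwise $A$ lies entirely in $|G| \setminus S_i$; in particular $x, y \in S_{i+1} \setminus S_i$, and since a connected subset of $|G| \setminus S_i$ sits inside a single component of $G - S_i$ (by the standard decomposition of $|G| \setminus S_i$ into open pieces in bijection with the components of $G - S_i$, each containing the ends with a tail in that component), $x$ and $y$ lie in the same component of $G - S_i$, so $\hat S_i$ contains an outer edge $xy$ of length $d(x,y)$, which is itself an $x$-$y$~path of that length. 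In either case the minimality of $P$ gives $\ell_i(P) \leq d(x,y) + \varepsilon$, and $\varepsilon \to 0$ finishes the proof. The main subtlety, and likely the only step requiring more than direct unpacking of definitions, is this last topological observation about $|G| \setminus S_i$; everything else is a routine comparison of lengths mediated by the restriction map and the outer edges.
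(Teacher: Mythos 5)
Your proof is correct and follows essentially the same route as the paper's: the lower bound by replacing outer edges with arcs of length at most $\ell_i(uv)+\eps/k$ and invoking \Lr{arc}, and the upper bound by restricting a near-optimal arc to $\hat S_i$ and applying \Lr{lemma:shorter}. Your explicit treatment of the corner case where the arc $A$ avoids $S_i$ entirely (so that $A|\hat S_i=\emptyset$) is a point the paper's proof silently glosses over, and your resolution via the outer edge $xy$ of length $d(x,y)$ is the right one.
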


\begin{proof}
Suppose first that $\ell_i(P)<d(x,y)$. Replacing every outer edge $uv$ in $P$ by a $u$--$v$~arc of length $\ell_i(uv)+\eps$ in $|G|$ for a sufficiently small $\eps$, we obtain a topological $x$--$y$~path in $|G|$ whose image is shorter than $d(x,y)$. Since, by \Lr{arc}, the image of every topological path contains an arc with the same endpoints, this contradicts the definition of $d(x,y)$. Next, suppose that $\ell_i(P)>d(x,y)$. In this case, there is by the definition of $d(x,y)$ an $x$--$y$~arc $Q$ in $|G|$ with $\ell(Q)<\ell_i(P)$. Then $\ell_i(Q|\hat S_i)\leq \ell(Q)<\ell_i(P)$ by \Lr{lemma:shorter}, contradicting the choice of $P$.
\end{proof}

Let $C\in \CCC(G)$. For the proof of \Tr{thm:generating} we will construct a family of $\ell$-geodetic circles in $\omega$ steps, choosing finitely many of these at each step. To ensure that the resulting family will be thin, we will restrict the lengths of those circles: the next two lemmas will help us bound these lengths from above, using the following amounts $\eps_i$ that vanish as $i$ grows.
\begin{equation*}
\eps_i\assign\sup \{d(x,y) \mid x,y \in |G|\emtext{ and there is an } x\emtext{--}y~\emtext{arc in } |G| \setminus G[S_{i-1}]\}.
\end{equation*}
The space $|G| \setminus G[S_{i-1}]$ considered in this definition is the same as the union of $|G-S_{i-1}|$ and the inner points of all edges from $S_{i-1}$ to $V(G) \setminus S_{i-1}$. Note that as $|G|$ is compact, each $\eps_i$ is finite.

\begin{lemma}
  \label{lemma:shortcycles}
  Let $j\in\N$, let $C$ be a cycle in $\hat S_j$, and let $i\in\N$ be the smallest index such that $C$ meets $S_i$. Then $C$ can be written as a sum of $\ell_j$-geodetic cycles in $\hat S_j$ each of which has length at most $5\eps_i$ in $\hat S_j$.
\end{lemma}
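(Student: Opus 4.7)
The plan is to write $C$ as a sum of short closed walks in $\hat S_j$ built from shortest-path-type connections to a single base vertex, and then to decompose each such walk into simple cycles short enough for \Tr{thm:fin:L} to produce the desired $\ell_j$-geodetic cycles. The first step is to bound individual edges of $C$: since $V(C)\cap S_{i-1}=\emptyset$, an outer edge $uu'$ of $\hat S_j$ appearing on $C$ corresponds to a $u$--$u'$~arc lying in a component of $G-S_j\subseteq|G|\setminus G[S_{i-1}]$, so $\ell_j(uu')=d(u,u')\le\eps_i$; and for a real edge $e=uu'$ on $C$, taking the two points $x,y$ of $e$ at distances $\ell(e)/4$ from $u$ and $u'$, every $x$--$y$~arc in $|G|$ either stays on $e$ (length $\ell(e)/2$) or leaves $e$ through $u$ or $u'$, in which case its sub-arcs on $e$ already sum to $\ell(e)/2$. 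Hence $d(x,y)=\ell(e)/2\le\eps_i$ and $\ell_j(e)=\ell(e)\le 2\eps_i$, so every edge of $C$ has $\ell_j$-length at most $2\eps_i$.

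Next, fix $v\in V(C)\cap S_i$ and assign to every $u\in V(C)$ a $u$--$v$~path $P_u$ in $\hat S_j$ with $\ell_j(P_u)\le\eps_i$. Such paths exist because replacing the outer edges of $C$ by their corresponding $|G|$-arcs yields a topological $u$--$v$~path in $|G|\setminus G[S_{i-1}]$, which by \Lr{arc} contains a $u$--$v$~arc there, so $d(u,v)\le\eps_i$. In the main case $i\le j$, any such arc $Q$ meets $S_j$ (since $v\in S_j$), so $P_u\assign Q|\hat S_j$ is a genuine $u$--$v$~path with $\ell_j(P_u)\le\ell(Q)$ by \Lr{lemma:shorter}; in the boundary case $i=j+1$, $C$ consists entirely of outer edges (no real edge is available since $V(C)\cap S_j=\emptyset$), and $u,v$ lie in the same component of $G-S_j$, so the outer edge $uv\in E(\hat S_j)$ itself has length $d(u,v)\le\eps_i$ and plays the role of $P_u$.

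For each $e=uu'\in E(C)$ form the closed walk $W_e\assign P_u\cdot e\cdot P_{u'}^{-1}$; its total $\ell_j$-length is at most $\eps_i+2\eps_i+\eps_i=4\eps_i$. The set $Z_e$ of edges used an odd number of times in $W_e$ is an element of $\CCC(\hat S_j)$ of $\ell_j$-length at most $4\eps_i$, and decomposes as an edge-disjoint union of simple cycles each of length at most $4\eps_i$ (via \Lr{pro:cyclespace:circles} applied to the finite graph $\hat S_j$, or elementary cycle-space arguments). Summing over $e\in E(C)$, every vertex of $C$ is incident to exactly two edges of $C$, so every $P_u$ is summed twice and cancels, giving $C=\sum_{e\in E(C)}Z_e$ in $\CCC(\hat S_j)$. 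Applying \Tr{thm:fin:L} to each simple cycle arising from these $Z_e$'s writes it as a sum of $\ell_j$-geodetic cycles of length at most $4\eps_i<5\eps_i$, and combining these decompositions expresses $C$ as a sum of $\ell_j$-geodetic cycles of length at most $5\eps_i$, as required.

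The only delicate point is the edge-length bound in the first step: a pointwise diameter bound on $V(C)$ alone does not control $\ell(e)$ for a real edge, because a detour around $e$ through $G$ could in principle be arbitrarily shorter than $e$ itself. The quarter-point trick resolves this by forcing any alternative arc between two interior points of $e$ to traverse substantial portions of $e$, and the slack between the derived bound $4\eps_i$ and the stated $5\eps_i$ leaves comfortable room for a slightly less sharp quantification in any of the intermediate steps.
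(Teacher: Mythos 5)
Your proof is correct, but it takes a genuinely different route from the paper's. The paper proceeds by an induction on \emph{$C$-sectors}: cycles consisting of a subpath of $C$ together with a path of length at most $\eps_i$; it shows that any $C$-sector longer than $5\eps_i$ splits, via a carefully placed vertex $z$ and a short chord $P$, into a cycle of length at most $5\eps_i$ and a sum of strictly shorter $C$-sectors, and then iterates until only short cycles remain. You instead use a star (fan) decomposition: connect every vertex of $C$ to a fixed hub $v$ by a path of length at most $\eps_i$, form for each edge $e=uu'$ of $C$ the closed walk $P_u\cdot e\cdot P_{u'}^{-1}$ of length at most $4\eps_i$, observe that the $P_u$'s cancel in pairs so that the walks sum to $C$, and finish with \Tr{thm:fin:L}. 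Your route avoids the paper's termination argument and the bookkeeping needed to verify that $Q_1+P$ is a sum of $C$-sectors contained in $Q_1\cup P$, and it even yields the slightly better bound $4\eps_i$; the ingredients you need (the per-edge bound $\ell_j(e)\le 2\eps_i$ via the quarter-point/midpoint trick, and the existence of short connecting paths) are exactly those the paper also establishes. One small point of hygiene: to get $\ell_j(P_u)\le\eps_i$ \emph{exactly} you should not restrict a near-optimal arc $Q$ (which only gives $\ell_j(P_u)\le d(u,v)+\delta$), but simply take a shortest $u$--$v$ path in the finite graph $\hat S_j$ and invoke \Lr{lemma:shestPath}, which gives $\ell_j(P_u)=d(u,v)\le\eps_i$; this is precisely how the paper handles the same step, so it is a one-line fix rather than a gap. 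Your boundary case $i=j+1$ is likewise subsumed by \Lr{lemma:shestPath} and need not be treated separately.
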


\begin{proof}
  We will say that a cycle $D$ in $\hat S_j$ is \emph{a $C$-sector} if there are vertices $x,y$ on $D$ such that one of the $x$--$y$~paths on $D$ has length at most $\eps_i$ and the other, called a \emph{$C$-part} of $D$, is contained in $C$.

  We claim that every $C$-sector $D$ longer than $5\eps_i$ can be written as a sum of cycles shorter than $D$, so that every cycle in this sum either has length at most $5\eps_i$ or is another $C$-sector. Indeed, let $Q$ be a $C$-part of $D$ and let $x,y$ be its endvertices. Every edge $e$ of $Q$ has length at most $2\eps_i$: otherwise the midpoint of $e$ has distance greater than $\eps_i$ from each endvertex of $e$, contradicting the definition of $\eps_i$. As $Q$ is longer than $4\eps_i$, there is a vertex $z$ on $Q$ whose distance, with respect to $\ell_j$, along $Q$ from $x$ is larger than $\eps_i$ but at most $3\eps_i$.
  Then the distance of $z$ from $y$ along $Q$ is also larger than $\eps_i$. By the definition of $\eps_i$ and \Lr{lemma:shestPath}, there is a $z$--$y$~path $P$ in $\hat S_j$ with $\ell_j(P) \leq \eps_i$.

\showFig[0.6\linewidth]{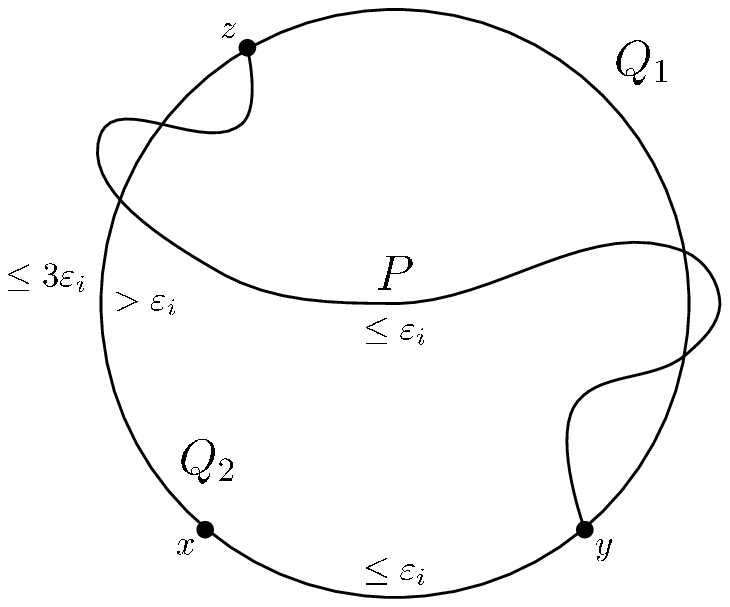}{fig:bsp:Csector}{The paths $Q_1$, $Q_2$, and $P$ in the proof of \Lr{lemma:shortcycles}.}

  Let $Q_1=zQy$ and let $Q_2$ be the other $z$--$y$~path in $D$. (See also Figure~\ref{fig:bsp:Csector}.) Note that $Q_2$ is the concatenation of $zQ_2x$ and $xQ_2y$. Since $\eps_i<\ell_j(zQ_2x)\le 3\eps_i$ and $\ell_j(xQ_2y)\le\eps_i$, we have $\eps_i<\ell_j(Q_2)\le 4\eps_i$. For any two paths $R,L$, we write $R+L$ as a shorthand for the symmetric difference of $E(R)$ and $E(L)$. It is easy to check that every vertex is incident with an even number of edges in $Q_2+ P$, which means that $Q_2+P$ is an element of the cycle space of $\hat S_j$, so by \Lr{pro:cyclespace:circles} it can be written as a sum of edge-disjoint cycles in $\hat S_j$. Since $\ell_j(Q_2+P)\le\ell_j(Q_2)+\ell_j(P)\le 4\eps_i+\eps_i=5\eps_i$, every such cycle has length at most $5\eps_i$. On the other hand, we claim that $Q_1+ P$ can be written as a sum of $C$-sectors that are contained in $Q_1\cup P$. If this is true then each of those $C$-sectors will be shorter than $D$ since

  \begin{equation*}
    \ell_j(Q_1 \cup P)\le\ell_j(Q_1)+\ell_j(P)\le\ell_j(Q_1)+\eps_i<\ell_j(Q_1)+\ell_j(Q_2)=\ell_j(D).
  \end{equation*}

  To prove that $Q_1+P$ is a sum of such $C$-sectors, consider the vertices in $X\assign V(Q_1)\cap V(P)$ in the order they appear on $P$ (recall that $P$ starts at $z$ and ends at $y$) and let $v$ be the last vertex in this order such that $Q_1v+Pv$ is the (possibly trivial) sum of $C$-sectors contained in $Q_1\cup P$ (there is such a vertex since $z\in X$ and $Q_1z+Pz=\emptyset$). Suppose $v\not= y$ and let $w$ be the successor of $v$ in $X$. The paths $vQ_1w$ and $vPw$ have no vertices in common other than $v$ and $w$, hence either they are edge-disjoint or they both consist of the same edge $vw$. In both cases, $Q_1w+Pw=(Q_1v+Pv)+(vQ_1w+vPw)$ is the sum of $C$-sectors contained in $Q_1\cup P$, since $Q_1v+Pv$ is such a sum and $vQ_1w+vPw$ is either the empty edge-set or a $C$-sector contained in $Q_1\cup P$ (recall that $vQ_1w\subset C$ and $\ell_j(vPw)\le\eps_i$). This contradicts the choice of $v$, therefore $v=y$ and $Q_1+P$ is a sum of $C$-sectors as required.

  Thus every $C$-sector longer than $5\eps_i$ is a sum of shorter cycles, either $C$-sectors or cycles shorter than $5\eps_i$. As $\hat S_j$ is finite and $C$ is a $C$-sector itself, repeated application of this fact yields that $C$ is a sum of cycles not longer than $5\eps_i$. By Proposition~\ref{thm:fin:L}, every cycle in this sum is a sum of $\ell_j$-geodetic cycles in $\hat S_j$ not longer than $5\eps_i$; this completes the proof.
\end{proof}

\begin{lemma}
  \label{lemma:shortdistance}
  The sequence $(\eps_i)_{i\in\N}$ converges to zero.
\end{lemma}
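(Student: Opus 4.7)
The plan is a compactness-plus-separation argument by contradiction. First observe that $(\eps_i)$ is non-increasing, since $G[S_{i-1}] \subseteq G[S_i]$ shrinks the family of arcs that contribute to the supremum in the definition of $\eps_i$; hence $\eps_i$ converges to some $\eps \ge 0$, and the task is to rule out $\eps > 0$. Assuming $\eps > 0$, for each large $i$ select points $x_i, y_i \in |G|$ and an $x_i$--$y_i$~arc $A_i$ in $|G| \setminus G[S_{i-1}]$ with $d(x_i, y_i) \ge \eps/2$. By \Lr{pro:compact}, $|G|$ is compact, so after passing to a subsequence we may assume $x_i \to x$ and $y_i \to y$ in $|G|$; since $d$ induces the topology of $|G|$, we get $d(x, y) \ge \eps/2$, and in particular $x \ne y$.

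Next I would verify that both limits must be ends. If $x$ were a vertex or an interior point of an edge, then some basic open neighborhood $N$ of $x$ would lie in the union of a fixed finite set of edges, all of whose endpoints belong to $S_k$ for some $k$; thus $N \subseteq G[S_k]$, and for every $i > k+1$ the point $x_i \in N$ would lie in $G[S_{i-1}]$, contradicting $x_i \in A_i \subseteq |G| \setminus G[S_{i-1}]$. Symmetrically, $y$ is an end. Since $x$ and $y$ are distinct ends, by definition there is a finite vertex set $S$ with $C_x := C(S, x) \ne C(S, y) =: C_y$, and for $i$ large we have $S \subseteq S_{i-1}$, so in particular $A_i \subseteq |G| \setminus G[S]$.

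To reach a contradiction I would invoke the standard description of the connected components of $|G| \setminus G[S]$: they are precisely the sets $\hat C$, one per component $C$ of $G - S$, where $\hat C$ consists of $C$, the ends in $\Omega(S, C)$, and the half-open subintervals of $S$--$C$ edges that omit their $S$-endpoint. Sufficiently small basic neighborhoods of $x$ and $y$ are contained in $\hat C_x$ and $\hat C_y$ respectively, so for all large $i$ we have $x_i \in \hat C_x$ and $y_i \in \hat C_y$; but then the connected arc $A_i \subseteq |G| \setminus G[S]$ would have to meet two distinct components of $|G| \setminus G[S]$, a contradiction. The main obstacle is precisely this last topological step: verifying that distinct components of $G - S$ really do yield distinct connected components of $|G| \setminus G[S]$, which requires a careful look at the basic open neighborhoods of ends and at the half-edges incident to vertices of $S$. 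Everything else reduces to routine compactness and a direct unpacking of the definitions.
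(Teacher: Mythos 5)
Your proof is correct. It takes a related but genuinely different route from the paper's. The paper picks, for each $i$, a single vertex $c_i$ in the component of $|G|\setminus G[S_i]$ witnessing $\eps_{i+1}>\eps$, lets these vertices accumulate at one end $\oo$ (via Corollary~\ref{cor:closure}), and then argues metrically: the open ball $U$ of radius $\eps/2$ about $\oo$ must contain a whole basic neighbourhood $\hat C(S_i,\oo)$ and hence a whole later component $C_j$, whose diameter is thereby forced below $\eps$ by the triangle inequality. You instead track both witnessing points $x_i,y_i$, pass to convergent subsequences (legitimate, since $|G|$ is a compact metrizable space in a metric representation), show the limits are two \emph{distinct} ends, and extract the contradiction combinatorially: the arcs $A_i$ eventually avoid a finite separator $S$ of the two ends yet would have to join two distinct components of $|G|\setminus G[S]$. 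Both arguments ultimately rest on the same structural fact --- that the components of $|G|\setminus G[S]$ are exactly the sets $\hat C$ arising from the components $C$ of $G-S$; the paper needs it to assert that the sets $\hat C(S_i,\oo)$ form a neighbourhood basis of $\oo$, while you need it for the separation step, and you correctly flag it as the one point requiring care. The trade-off: the paper's version needs only one accumulation point and never has to argue that the limit is an end, at the cost of the metric-ball step; yours is a little longer (two limits, the verification that each is an end, the choice of a separator) but uses the metric only once, to conclude $x\neq y$ from $d(x,y)\ge\eps/2$.
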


\begin{proof}
  The sequence $(\eps_i)_{i\in\N}$ converges since it is decreasing. Suppose there is an $\eps>0$ with $\eps_i>\eps$ for all $i$. Thus, for every $i\in\N$, there is a component $C_i$ of $|G| \setminus G[S_i]$ in which there are two points of distance at least $\eps$. For every $i\in\N$, pick a vertex $c_i\in C_i$. By Corollary~\ref{cor:closure}, there is an end $\omega$ in the closure of the set $\{c_0,c_1,\dotsc\}$ in $|G|$. Let $\hat C(S_i,\oo)$ denote the component of $|G|\setminus G[S_i]$ that contains $\oo$. It is easy to see that the sets $\hat C(S_i,\oo),~i\in\NN$, form a neigbourhood basis of $\oo$ in $|G|$.

  As $U\assign \{x \in |G| \mid d(x,\oo)<\frac12\eps\}$ is open in $|G|$, it has to contain $\hat C(S_i,\omega)$ for some $i$. Furthermore, there is a vertex $c_j\in\hat C(S_i,\omega)$ with $j\ge i$, because $\omega$ lies in the closure of $\{c_0,c_1,\dotsc\}$. As $S_j\supset S_i$, the component $C_j$ of $|G| \setminus G[S_j]$ is contained in $\hat C(S_i,\omega)$ and thus in $U$. But any two points in $U$ have distance less than $\eps$, contradicting the choice of $C_j$.
\end{proof}

This implies in particular that:

\begin{corollary}
  \label{cor:outeredges}
  Let $\eps>0$ be given. There is an $n\in\N$ such that for every $i\ge n$, every outer edge of $\hat S_i$ is shorter than $\eps$.
\end{corollary}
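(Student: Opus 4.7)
The plan is to reduce the statement directly to Lemma~\ref{lemma:shortdistance} by observing that the length of any outer edge of $\hat S_i$ is controlled by $\eps_{i+1}$.

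First I would fix an arbitrary outer edge $e=uv$ of $\hat S_i$. By construction, $u,v\in S_{i+1}\setminus S_i$ lie in the same component $K$ of $G-S_i$, and the length assigned to $e$ in $(|\hat S_i|,\ell_i)$ is $d_\ell(u,v)=d(u,v)$. Since $u$ and $v$ lie in the component $K$ of $G-S_i$, there is a $u$--$v$ path $P$ in $K$; as a subset of $|G|$, $P$ consists of vertices outside $S_i$ and edges whose endpoints both lie outside $S_i$, so $P$ meets no vertex of $S_{i-1}$ and no edge of $G[S_i]\supseteq G[S_{i-1}]$. Hence $P$ is an arc in $|G|\setminus G[S_i]$, which, in the notation of the definition of $\eps_{i+1}$, is exactly a space of the allowed type. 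It follows that $d(u,v)\le\eps_{i+1}$, and therefore every outer edge of $\hat S_i$ has $\ell_i$-length at most $\eps_{i+1}$.

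Now I would invoke Lemma~\ref{lemma:shortdistance}: since $\eps_i\to 0$, for the given $\eps>0$ there exists $n\in\N$ such that $\eps_{i+1}<\eps$ whenever $i\ge n$. Combined with the bound of the previous paragraph, this immediately gives that for every $i\ge n$ every outer edge of $\hat S_i$ is shorter than $\eps$, as required. There is no substantial obstacle here; the only point that deserves care is verifying that a path inside a component of $G-S_i$ really does lie in $|G|\setminus G[S_{i-1}]$ (i.e.\ avoids not only $S_{i-1}$ but also all edges of $G[S_{i-1}]$), which is immediate because both endpoints of any edge of $G[S_{i-1}]$ already belong to $S_i$.
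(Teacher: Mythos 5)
Your proof is correct and is exactly the argument the paper leaves implicit when it states that the corollary ``follows in particular'' from Lemma~\ref{lemma:shortdistance}: an outer edge $uv$ of $\hat S_i$ has length $d(u,v)\le\eps_{i+1}$ because a $u$--$v$ path in a component of $G-S_i$ is an arc in $|G|\setminus G[S_i]$, and $\eps_i\to 0$ finishes it. (Only a cosmetic slip: in your closing remark the relevant space is $|G|\setminus G[S_i]$, not $|G|\setminus G[S_{i-1}]$, matching the definition of $\eps_{i+1}$; your second paragraph already has this right.)
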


\subsection{Limits of paths and cycles}
\label{subsec:proof:limit}

In this section we develop some tools that will help us obtain $\ell$-geodetic circles as limits of sequences of $\ell_i$-geodetic cycles in the $\hat S_i$.

A \emph{chain} of paths (respectively cycles) is a sequence $X_j,X_{j+1},\dotsc$ of paths (resp.\ cycles), such that every $X_i$ with $i\ge j$ is the restriction of $X_{i+1}$ to $\hat S_i$.

\begin{definition}
  \label{def:limit}
  The \emph{limit} of a chain $X_j,X_{j+1},\dotsc$ of paths or cycles, is the closure in $|G|$ of the set
  \begin{equation*}
    \tilde X\assign\bigcup_{j\le i<\omega}\left(X_i\cap\tilde S_i\right).
  \end{equation*}
\end{definition}

Unfortunately, the limit of a chain of cycles does not have to be a circle, as shown in Figure~\ref{fig:limitnocycle}. However, we are able to prove the following lemma.

\showFig{GeoKreise}{fig:limitnocycle}{A chain $X_0,X_1,\dotsc$ of cycles (drawn thick), whose limit $X$ is not a circle (but the edge-disjoint union of two circles).}

\begin{lemma}
  \label{lemma:continuous}
  The limit of a chain of cycles is a continuous image of $S^1$ in $|G|$. The limit of a chain of paths is the image of a topological path in $|G|$. The corresponding continuous map can be chosen so that every point in $G$ has at most one preimage, while the preimage of each end of $G$ is a totally disconnected set.
\end{lemma}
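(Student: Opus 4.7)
The plan is to build $\sigma$ as a pointwise limit of compatible homeomorphisms $\sigma_i\colon S^1 \to X_i \subseteq |\hat S_i|$ in the cycle case; for paths, $S^1$ is replaced by a closed interval, with the parameterization extended across levels to accommodate the movement of the endpoints of $X_i$. Starting from any homeomorphism $\sigma_j$, I recursively define $\sigma_{i+1}$ so that it coincides with $\sigma_i$ on the closed set $A_i \assign \sigma_i^{-1}(Y_i)$, where $Y_i \assign E(X_i) \cap E(\tilde S_i)$ is the set of real edges of $X_i$, and on each remaining open arc $I \subseteq S^1$ corresponding to an outer edge $e = uv$ of $X_i$, the map $\sigma_{i+1}$ is a homeomorphism from $I$ onto the $u$--$v$ path $P_e$ in $X_{i+1}$ that replaces $e$, chosen so that each edge of $P_e$ is assigned a sub-interval of length $|I|/|E(P_e)|$. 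The key combinatorial point is that $u,v \in S_{i+1}$ are never endpoints of outer edges of $\hat S_{i+1}$, so $P_e$ must contain at least two edges whenever any of its edges is outer, forcing any sub-arc of $I$ corresponding to an outer edge of $X_{i+1}$ to have length at most $|I|/2$. The main anticipated obstacle is exactly this: to guarantee the exponential shrinkage while keeping the construction of the $\sigma_i$ truly compatible with the restrictions $X_i = X_{i+1}|\hat S_i$.

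For each $t \in S^1$ there are two cases. If $t \in A_i$ for some $i$, then by construction $\sigma_j(t) = \sigma_i(t)$ for every $j \geq i$, and I put $\sigma(t) \assign \sigma_i(t) \in G$. Otherwise $t$ lies in $B \assign S^1 \setminus \bigcup_i A_i$, and $\sigma_i(t)$ lies in the interior of some outer edge $e_i = u_iv_i$ of $X_i$ for every $i$. The vertices $u_i,v_i$ span the same component $C_i$ of $G - S_i$, and $C_{i+1} \subseteq C_i$ because the $u_i$--$v_i$ path in $X_{i+1}$ lies entirely in $\overline{C_i}$. By \Lr{lemma:shortdistance}, the diameters of the $\overline{C_i}$ in $(|G|,d)$ tend to zero, so $\bigcap_i \overline{C_i}$ is a single point; since each vertex and each inner edge point eventually leaves $\overline{C_i}$, this point must be an end $\omega \in \Omega$, which I take as $\sigma(t)$.

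Continuity is the technical heart. At a vertex $v = \sigma(t)$ with $t \in A_i$, the key observation is that $v \in S_{i+1}$ is never incident with an outer edge of $\hat S_{i+1}$, so every edge of $X_{i+1}$ at $v$ is a real edge of $G$, and a short neighbourhood of $t$ is mapped by $\sigma_{i+1}$ onto short sub-intervals of these real edges near $v$, close to $v$ in $|G|$. For $t \in B$ with $\sigma(t) = \omega$ and a given basic open neighbourhood $\hat C(S_n,\omega)$ of $\omega$, I choose $i \geq n$ large enough that $C_i \subseteq C(S_n,\omega)$; the open arc $I_i \ni t$ corresponding to $e_i$ is a neighbourhood of $t$ whose image under $\sigma$ lies in $\overline{C_i} \subseteq \hat C(S_n,\omega)$, because for every $s \in I_i$ and every $j \geq i$ the point $\sigma_j(s)$ stays on the $u_i$--$v_i$ path in $X_j$, hence in $\overline{C_i}$. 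That $\sigma(S^1) = \tilde X$ then follows from $\sigma(S^1)$ being compact, hence closed, and sandwiched between $\bigcup_i Y_i$ and $\overline{\bigcup_i Y_i} = \tilde X$.

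Injectivity on $G$ is immediate because each point of $G$ lies in some $X_i$ where $\sigma_i$ is a homeomorphism. To show that $\sigma^{-1}(\omega)$ is totally disconnected for each end $\omega$, I argue by contradiction: if a non-trivial interval $[t_1,t_2]$ lay in $\sigma^{-1}(\omega) \subseteq B$, then its connectedness together with the fact that $B_i \assign S^1 \setminus A_i$ is a disjoint union of open outer-edge arcs would force $[t_1,t_2]$ to lie in a single component $I_i^\ast$ of $B_i$ at every level $i$; these nested arcs all contain $[t_1,t_2]$, yet by the shrinkage bound $|I_{i+1}^\ast| \leq |I_i^\ast|/2$ this would give $|I_i^\ast| \to 0$, contradicting $|t_2 - t_1| > 0$.
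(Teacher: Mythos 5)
Your proposal is correct and follows essentially the same route as the paper: a limit of compatible homeomorphisms $\sigma_i\colon S^1\to X_i$, with stable points mapped into $G$ and the remaining points mapped to ends, and with control on the sizes of the preimages of outer edges yielding total disconnectedness. The only real difference is the mechanism for that control --- the paper simply stipulates that the preimage of every outer edge of $X_i$ has length at most $\frac1i$, whereas you derive a geometric halving from equal subdivision together with the observation that a replacement path containing an outer edge of $\hat S_{i+1}$ has at least two edges --- and this is an equivalent (and correctly justified) device.
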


\begin{proof}
  Let $X_0,X_1,\dotsc$ be a chain of cycles (proceed analogously for a chain $X_j,X_{j+1},\dotsc$) and let $X$ be its limit. We define the desired map $\sigma:S^1\to X$ with the help of homeomorphisms $\sigma_i:S^1 \to X_i$ for every $i\in\N$. Start with some homeomorphism $\sigma_0:S^1\to X_0$. Now let $i\ge 1$ and suppose that $\sigma_{i-1}:S^1\to X_{i-1}$ has already been defined. We change $\sigma_{i-1}$ to $\sigma_i$ by mapping the preimage of any outer edge in $X_{i-1}$ to the corresponding path in $X_i$. While we do this, we make sure that the preimage of every outer edge in $X_i$ is not longer than $\frac1i$.

  Now for every $x\in S^1$, define $\sigma(x)$ as follows. If there is an $n\in \N$ such that $\sigma_i(x)=\sigma_n(x)$ for every $i\geq n$, then define $\sigma(x)=\sigma_n(x)$. Otherwise, $\sigma_i(x)$ lies on an outer edge $u_iv_i$ for every $i\in\N$. By construction, there is exactly one end $\omega$ in the closure of $\{u_0,v_0,u_1,v_1,\dotsc\}$ in $|G|$, and we put $\sigma(x)\assign\omega$.

  It is straightforward to check that $\sigma:S^1\to X$ is continuous, and that $\tilde{X}\subseteq \sigma(S^1)$. As $\sigma(S^1)$ is a continuous image of the compact space $S^1$ in the Hausdorff space $|G|$, it is closed in $|G|$, thus $\sigma(S^1)=X$. By construction, only ends can have more than one preimage under $\sigma$. Moreover, as we defined $\sigma_i$ so that the preimage of every outer edge is not longer than $\frac1i$, any two points $x,y\in S^1$ that are mapped by $\sigma$ to ends are mapped by $\sigma_i$ to distinct outer edges for every sufficiently large $i$. Therefore, there are points in $S^1$ between $x$ and $y$ that are mapped to a vertex by $\sigma_i$ and hence also by $\sigma$, which shows that the preimage of each end under $\sigma$ is totally disconnected.

  For a chain $X_0,X_1,\dots$ of paths, the construction is slightly different: As the endpoints of the paths $X_i$ may change while $i$ increases, we let $\sigma_i: [0,1]\to X$ map a short interval $[0,\delta_i]$ to the first vertex of $X_i$, and the interval $[1-\delta_i,1]$ to the last vertex of $X_i$, where $\delta_i$ is a sequence of real numbers converging to zero. Except for this difference, the construction of a continuous map $\sigma: [0,1]\to X$ imitates that of the previous case.
\end{proof}

Recall that a circle is $\ell$-geodetic if for every two points $x,y\in C$, one of the two $x$--$y$~arcs in $C$ has length~$d(x,y)$. Equivalently, a circle $C$ is $\ell$-geodetic if it has no \emph{shortcut}, that is, an arc in $|G|$ with endpoints $x,y \in C$ and length less than both $x$--$y$~arcs in $C$. Indeed, if a circle $C$ has no shortcut, it is easily seen to contain a shortest $x$--$y$~arc for any points $x,y\in C$. 

It may seem more natural if a shortcut of $C$ is a $C$\emph{-arc}, that is, an arc that meets $C$ only at its endpoints. The following lemma will allow us to only consider such shortcuts (in particular, we only have to consider arcs with endpoints in $\hat V$).

\begin{lemma}
  \label{lemma:shortcut}
Every shortcut of a circle $C$ in $|G|$ contains a $C$-arc which is also a shortcut of $C$.
\end{lemma}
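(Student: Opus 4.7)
The plan is to argue by contradiction. Suppose $A$ is a shortcut of $C$ but no $C$-arc contained in $A$ is a shortcut; the goal is to derive $\ell(A)\geq d_C(x,y)$, writing $x,y$ for the endpoints of $A$ and $d_C(p,q)$ for the length of the shorter of the two $p$--$q$~arcs on $C$. This will contradict the defining property of a shortcut, namely $\ell(A)<d_C(x,y)$.

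Parametrise $A$ as $\alpha\colon[0,1]\to|G|$ with $\alpha(0)=x$ and $\alpha(1)=y$. Since $C$ is closed in $|G|$, the set $\alpha^{-1}(C)\subseteq[0,1]$ is closed; if $\alpha^{-1}(C)=\{0,1\}$ then $A$ itself is the desired $C$-arc and there is nothing to show. Otherwise, its complement in $[0,1]$ is a countable disjoint union of open intervals $(a_i,b_i)$, and each $A_i\assign\alpha([a_i,b_i])$ is a $C$-arc with endpoints $u_i\assign\alpha(a_i)$ and $v_i\assign\alpha(b_i)$. By the standing hypothesis, $\ell(A_i)\geq d_C(u_i,v_i)$ for every $i$.

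The key construction is a continuous ``rerouting'' map $\beta\colon[0,1]\to C$ that coincides with $\alpha$ on $\alpha^{-1}(C)$ and, on each $[a_i,b_i]$, parametrises the shorter $u_i$--$v_i$~arc of $C$. The step that needs the most care---and which I expect to be the main obstacle---is continuity of $\beta$ at a point $t_0\in\alpha^{-1}(C)$ that is a limit of intervals $(a_{i_n},b_{i_n})$: here $\alpha(a_{i_n}),\alpha(b_{i_n})\to\alpha(t_0)$ in $|G|$ by continuity of $\alpha$, and since $C$ is a topological circle, the shorter $u_{i_n}$--$v_{i_n}$~arcs eventually lie in arbitrarily small neighbourhoods of $\alpha(t_0)$ in $C$. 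A local argument via any homeomorphism $S^1\to C$ makes this rigorous (in effect using that subspace and arc-length topologies on $C$ agree on finite-length neighbourhoods).

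With $\beta$ in hand, it is a topological path in $C$ from $x$ to $y$, so by \Lr{arc} its image contains an $x$--$y$~arc $D\subseteq C$. Any such $D$ is one of the two $x$--$y$~arcs of $C$, so $\ell(D)\geq d_C(x,y)$. On the other hand, the image of $\beta$ is the union of $A\cap C$ with the chosen shorter arcs, so by countable subadditivity of $\ell$,
\[
  \ell(D)\leq\ell(A\cap C)+\sum_i d_C(u_i,v_i)\leq\ell(A\cap C)+\sum_i\ell(A_i)=\ell(A),
\]
producing the desired contradiction $d_C(x,y)\leq\ell(A)<d_C(x,y)$.
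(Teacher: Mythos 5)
Your overall strategy is essentially the one the paper uses: decompose the shortcut into its maximal $C$-arcs, reroute each along the $\ell$-shorter arc of $C$ between its endpoints (which by hypothesis is no longer), and extract from the resulting subset of $C$ an $x$--$y$~arc that is at most as long as $A$, contradicting the shortcut property. The main structural difference is that the paper avoids any explicit parametrisation: it forms the closure $\overline{K}$ of the union of the rerouted arcs together with $A\cap C$, checks that this is a closed connected subset of $C$ containing $x$ and $y$, and invokes \Lr{pro:connected} (closed connected subspaces of $|G|$ are arc-connected) to extract the arc, whereas you build a continuous map $\beta$ and apply \Lr{arc}. Your final length estimate via countable subadditivity is fine and, if anything, a little cleaner than the paper's appeal to the length of a closure.

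The gap sits exactly where you expect it, and the repair you sketch does not work. The claim that the $\ell$-shorter $u_{i_n}$--$v_{i_n}$~arcs eventually lie in arbitrarily small neighbourhoods of $p=\alpha(t_0)$ does \emph{not} follow from $C$ being a topological circle, and no ``local argument via a homeomorphism $S^1\to C$'' can give it: a homeomorphism carries no information about which of the two arcs is $\ell$-shorter. Circles in $|G|$ may have infinite length concentrated at a single point (see the infinite-ladder example in Section~5 of the paper): if $p$ is an end at which one side of $C$ accumulates infinite length and $u_{i_n},v_{i_n}\to p$ from opposite sides, the topologically small arc through $p$ has infinite length, so the $\ell$-shorter arc is the one going the long way around, which stays far from $p$; your $\beta$ would then be discontinuous at $t_0$. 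For the same reason a point of $C$ need not have any finite-length neighbourhood in $C$, so the parenthetical about the arc-length topology is not available either. The statement you need is nonetheless true here, for a metric rather than a topological reason: the $A_i$ have pairwise disjoint interiors, so $\sum_i\ell(A_i)\le\ell(A)<\infty$ (every shortcut has finite length), whence the rerouted arcs satisfy $\ell(K_{i_n})=d_C(u_{i_n},v_{i_n})\le\ell(A_{i_n})\to 0$; an arc of length $\eta$ containing $u_{i_n}$ lies in the $d_{\ell}$-ball of radius $d_{\ell}(u_{i_n},p)+\eta$ about $p$, and these balls form a neighbourhood basis of $p$ precisely because $(|G|,\ell)$ is a metric representation. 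With that substitution the continuity of $\beta$, and hence your proof, goes through.
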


\begin{proof}
  Let $P$ be a shortcut of $C$ with endpoints $x,y$. As $C$ is closed, every point in $P\setminus C$ is contained in a $C$-arc in $P$. Suppose no $C$-arc in $P$ is a shortcut of $C$.
  We can find a family $(W_i)_{i\in \N}$ of countably many internally disjoint arcs in $P$, such 
  that for every $i$, $W_i$ is either a $C$-arc or an arc contained in $C$, and every edge in $P$ lies in some $W_i$ (there may, however, exist ends in $P$ that are not contained in any arc $W_i$).
  For every $i$, let $x_i,y_i$ be the endpoints of $W_i$ and pick a 
  $x_i$--$y_i$~arc $K_i$ as follows.
  If $W_i$ is contained in $C$, let $K_i = W_i$.
  Otherwise, $W_i$ is a $C$-arc and we let $K_i$ be the shortest 
  $x_i$--$y_i$~arc on $C$.
  Note that since $W_i$ is not a shortcut of $C$, $K_i$ is at most as long as $W_i$.

  Let $K$ be the union of all the arcs $K_i$.
  Clearly, the closure $\overline{K}$ of $K$ in $|G|$ is contained in $C$, 
  contains $x$ and $y$, and is at most as long as $P$.
  It is easy to see that $\overline{K}$ is a connected topological space; indeed, if not, then there are distinct edges $e,f$ on $C$, so that both components of $C - \{e,f\}$ meet $\overline{K}$, which cannot be the case by the construction of $K$.
  By \Tr{pro:connected}, $\overline{K}$ is also arc-connected, and so it 
  contains an $x$--$y$~arc that is at most as long as $P$, contradicting the fact that $P$ is a shortcut of $C$.

  Thus, $P$ contains a $C$-arc which is also a shortcut of $C$.
\end{proof}

By the following lemma, the restriction of any geodetic circle is also geodetic.

\begin{lemma}
  \label{lemma:restriction}
  Let $i>j$ and let $C$ be an $\ell$-geodetic circle in $|G|$ (respectively, an $\ell_i$-geodetic cycle in $\hat S_i$). Then $C_j\assign C | \hat S_j$ is an $\ell_j$-geodetic cycle in $\hat S_j$, unless $C_j=\emptyset$.
\end{lemma}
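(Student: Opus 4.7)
The plan is to prove the contrapositive: assuming $C_j\neq\emptyset$ is not $\ell_j$-geodetic, I will construct a shortcut of $C$, contradicting its $\ell$-geodeticity (respectively $\ell_i$-geodeticity).

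First I would unpack the failure of $\ell_j$-geodeticity at the level of cycles in the finite graph $\hat S_j$: it yields vertices $x,y \in V(C_j) \subseteq V(C)$ and an $x$-$y$~path $Q$ in $\hat S_j$ with
\begin{equation*}
\ell_j(Q) < \min\{\ell_j(\alpha), \ell_j(\beta)\},
\end{equation*}
where $\alpha$ and $\beta$ are the two $x$-$y$~paths on $C_j$. Because $C$ is a circle (resp.\ cycle) passing through $x$ and $y$, removing these two points splits $C$ into two components whose closures are $x$-$y$~arcs (resp.\ paths) $\tilde\alpha, \tilde\beta$; by the definition of restriction, their restrictions to $\hat S_j$ are, after relabelling, $\alpha$ and $\beta$. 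Applying \Lr{lemma:shorter} gives $\ell_j(\alpha) \le \ell(\tilde\alpha)$ and $\ell_j(\beta) \le \ell(\tilde\beta)$, with $\ell_i$ replacing $\ell$ throughout in the $\hat S_i$ case.

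Next I would lift $Q$ to the ambient space. Fix $\epsilon$ with
\begin{equation*}
0 < \epsilon < \min\{\ell_j(\alpha), \ell_j(\beta)\} - \ell_j(Q),
\end{equation*}
and recall that each outer edge $uv$ of $\hat S_j$ appearing on $Q$ has length $\ell_j(uv) = d(u,v)$. In the $|G|$ case I replace each such outer edge with a $u$-$v$~arc $A_{uv}$ in $|G|$ of length at most $\ell_j(uv) + \epsilon/k$, where $k$ is the number of outer edges in $Q$; such $A_{uv}$ exists by the definition of $d = d_\ell$. In the $\hat S_i$ case I instead take $A_{uv}$ to be a shortest $u$-$v$~path in $\hat S_i$, whose length is exactly $\ell_j(uv)$ by \Lr{lemma:shestPath} (applicable since $u,v \in S_{j+1} \subseteq S_{i+1}$). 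Concatenating these $A_{uv}$'s with the non-outer edges of $Q$ produces a topological $x$-$y$~path $\tilde Q$ in $|G|$ (resp.\ $|\hat S_i|$) whose image satisfies $\ell(\tilde Q) \le \ell_j(Q) + \epsilon$.

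By \Lr{arc}, the image of $\tilde Q$ contains an $x$-$y$~arc $\hat Q$. Since $\hat Q$ is contained in this image, every intersection $\hat Q \cap e$ is contained in the corresponding intersection for the image of $\tilde Q$, whence
\begin{equation*}
\ell(\hat Q) \le \ell(\tilde Q) \le \ell_j(Q) + \epsilon < \min\{\ell_j(\alpha), \ell_j(\beta)\} \le \min\{\ell(\tilde\alpha), \ell(\tilde\beta)\}.
\end{equation*}
Therefore $\hat Q$ is a shortcut of $C$ between $x$ and $y$, contradicting the hypothesis. The main obstacle I anticipate is bookkeeping the three length scales ($\ell$ on $|G|$ or $\ell_i$ on $\hat S_i$, $\ell_j$ on $\hat S_j$, and lengths along $C$ versus $C_j$); \Lr{lemma:shorter} bridges $C$ and $C_j$ while \Lr{lemma:shestPath} and the definition of $d$ bridge $\hat S_j$ and the ambient space, so choosing the slack $\epsilon$ before performing the lift is the key technical point that makes the inequalities combine into a strict shortcut.
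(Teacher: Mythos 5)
Your proposal is correct and follows essentially the same route as the paper: argue by contradiction from a shortcut of $C_j$, use Lemma~\ref{lemma:shorter} to show both $x$--$y$~arcs (resp.\ paths) of $C$ are at least as long as their restrictions, and then produce a shortcut of $C$ itself. The only cosmetic difference is that you lift the shortcut of $C_j$ to $|G|$ by hand (replacing outer edges by nearby arcs, which re-derives part of Lemma~\ref{lemma:shestPath}), whereas the paper simply invokes Lemma~\ref{lemma:shestPath} to get $\ell_j(P)\ge d(x,y)$ and then appeals to the definition of $d$ as an infimum; both are sound.
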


\begin{proof}
  Suppose for contradiction, that $C_j$ has a shortcut $P$ between the vertices $x,y$. Clearly, $x,y$ lie in $C$, so let $Q_1,Q_2$ be the two $x$--$y$~arcs (resp.\ $x$--$y$~paths) in $C$. We claim that $\ell(Q_k)>d(x,y)$ (resp.\ $\ell_i(Q_k)>d(x,y)$) for $k=1,2$. Indeed, as $P$ is a shortcut of $C_j$, and $Q_k|\hat S_j$ is a subpath of $C_j$ with endvertices $x,y$ for $k=1,2$, we have $\ell_j(Q_k|\hat S_j)>\ell_j(P)$. Moreover, by \Lr{lemma:shorter} we have $\ell(Q_k) \geq \ell_j(Q_k|\hat S_j)$ (resp.\ $\ell_i(Q_k) \geq \ell_j(Q_k|\hat S_j)$), and by \Lr{lemma:shestPath} $\ell_j(P)\geq d(x,y)$, so our claim is proved. But then, by the definition of $d(x,y)$ (resp.\ by \Lr{lemma:shestPath}), there is an $x$--$y$~arc $Q$ in $|G|$ such that $\ell(Q)<\ell(Q_k)$ (resp.\ an $x$--$y$~path $Q$ in $\hat S_i$ such that $d(x,y)=\ell_i(Q)<\ell_i(Q_k)$) for $k=1,2$, contradicting the fact that $C$ is $\ell$-geodetic (resp.\ $\ell_i$-geodetic).
\end{proof}

As already mentioned, the limit of a chain of cycles does not have to be a circle. Fortunately, the limit of a chain of geodetic cycles is always a circle, and in fact an $\ell$-geodetic one:

\begin{lemma}
  \label{lemma:limitgeo}
  Let $C$ be the limit of a chain $C_0,C_1,\dotsc$ of cycles, such that, for every $i\in\NN$, $C_i$ is $\ell_i$-geodetic in $\hat S_i$. Then $C$ is an $\ell$-geodetic circle.
\end{lemma}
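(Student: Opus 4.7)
The plan is to show separately that the limit $C$ is a circle and that it has no shortcut, which together yield $\ell$-geodeticity. By Lemma~\ref{lemma:continuous}, we may write $C=\sigma(S^1)$ for a continuous map $\sigma$ whose only multiply-covered fibres lie over ends and are totally disconnected. If we can prove $\sigma$ is injective, then it is a continuous bijection from compact $S^1$ to Hausdorff $|G|$, hence a homeomorphism, making $C$ a circle; a routine check from the definitions then gives $C | \hat S_j = C_j$ for every $j$, so that the hypothesis of $\ell_j$-geodeticity of the $C_j$ can be directly invoked in the second stage.

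For the injectivity of $\sigma$, I would suppose for contradiction that $\sigma(x)=\sigma(y)=\omega$ for distinct $x,y\in S^1$ and an end $\omega$, and let $A_1,A_2$ be the two arcs of $S^1$ joining $x$ and $y$. Since $\sigma^{-1}(\omega)$ is totally disconnected, each $A_k$ contains points mapped outside $\{\omega\}$, so $\sigma(A_k)$ is a connected subset of $|G|$ strictly containing $\{\omega\}$ and therefore traverses a positive-length portion of some edge; set $\alpha := \min_k \ell(\sigma(A_k)) > 0$. Using continuity of $\sigma$ and the total disconnectedness of $\sigma^{-1}(\omega)$, pick $p\in A_1$ very close to $x$ and $q\in A_2$ very close to $y$ with $\sigma(p) = v_p$ and $\sigma(q) = v_q$ both vertices of $G$ and $d(v_p, v_q) < \alpha/3$. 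By the construction of the approximating homeomorphisms $\sigma_i$ in Lemma~\ref{lemma:continuous}, one has $\sigma_i(p) = v_p$ and $\sigma_i(q) = v_q$ for all large $i$, and the two arcs of $C_i$ between $v_p$ and $v_q$ are the images $\sigma_i(A'_1), \sigma_i(A'_2)$ of the sub-arcs of $S^1$ from $p$ to $q$ containing the respective endpoints of $A_k$. A limiting argument then yields $\ell_i(\sigma_i(A'_k)) \ge \alpha/2$ for all large $i$, because every edge in the interior of $\sigma(A_k)$ eventually sits inside $\sigma_i(A'_k)$. But by Lemma~\ref{lemma:shestPath}, $d_i(v_p, v_q) = d(v_p, v_q) < \alpha/3$, contradicting the $\ell_i$-geodeticity of $C_i$.

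With $C$ now known to be a circle, suppose for contradiction that it has a shortcut. By Lemma~\ref{lemma:shortcut}, we may take a $C$-arc $P$ with endpoints $x, y \in \hat V$ and $\ell(P) < \ell(Q_k)$ for the two arcs $Q_1, Q_2$ of $C$ between $x$ and $y$. Lemma~\ref{lemma:shorter} gives $\ell_j(P | \hat S_j) \le \ell(P)$, while $\ell_j(Q_k | \hat S_j)$ tends to $\ell(Q_k)$ (or to $+\infty$) as $j \to \infty$, because every edge of $Q_k$ eventually belongs to $\tilde S_j$ and the error from replacing outside-arcs by outer edges is controlled by Corollary~\ref{cor:outeredges}. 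Hence for large $j$, $\ell_j(P | \hat S_j) < \ell_j(Q_k | \hat S_j)$ for $k = 1, 2$. If $x$ or $y$ is an end, the endpoints of $P | \hat S_j$ and $Q_k | \hat S_j$ may differ, but they lie in a common component of $G - S_{j-1}$, so they are joined by paths of length at most $\eps_j$ in $\hat S_j$; appending these to $P | \hat S_j$ yields a genuine shortcut of $C_j$, contradicting its $\ell_j$-geodeticity.

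The main obstacle is the length estimate $\ell_i(\sigma_i(A'_k)) \ge \alpha/2$ in the first stage: one has to relate the length of the limit continuum $\sigma(A_k)$ to the lengths of its finite approximations, tracking exactly which edges of $\sigma(A_k)$ persist in $\sigma_i(A'_k)$ as $i$ grows. In the second stage the length comparison itself is robust, but the endpoint alignment near an end relies crucially on Corollary~\ref{cor:outeredges} together with the decay $\eps_j \to 0$ provided by Lemma~\ref{lemma:shortdistance}.
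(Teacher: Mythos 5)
Your proof is correct and follows essentially the same two-stage strategy as the paper's: first forcing injectivity of the map $\sigma$ from Lemma~\ref{lemma:continuous} by turning a doubly-covered end into a shortcut of some $C_i$, and then refuting a putative shortcut of $C$ by restricting it to $\hat S_j$ and realigning its endpoints with short outer edges. The only differences are cosmetic: you obtain the shortcut of $C_i$ as a shortest $v_p$--$v_q$ path via Lemma~\ref{lemma:shestPath} where the paper uses the outer edge joining the first and last vertices of $\sigma(P_1)\,|\,\hat S_i$, and you argue via limits of restricted lengths where the paper fixes explicit $\eps$-margins.
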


\begin{proof}
  Let $\sigma$ be the map provided by \Lr{lemma:continuous} (with $C_i$ instead of $X_i$). We claim that $\sigma$ is injective.

  Indeed, as only ends can have more than one preimage under $\sigma$, suppose, for contradiction, that $\omega$ is an end with two preimages. These preimages subdivide $S^1$ into two components $P_1,P_2$. Choose $\eps \in \R_+$ smaller than the lengths of $\sigma(P_1)$ and $\sigma(P_2)$. (Note that as the preimage of $\omega$ is totally disconnected, both $\sigma(P_1)$ and $\sigma(P_2)$ have to contain edges of $G$, so their lengths are positive.) By Corollary~\ref{cor:outeredges}, there is a $j$ such that every outer edge of $\hat S_j$ is shorter than $\eps$. On the other hand, for a sufficiently large $i\geq j$, the restrictions of $\sigma(P_1)$ and $\sigma(P_2)$ to $\hat S_i$ are also longer than $\eps$. Thus, the distance along $C_i$ between the first and the last vertex of $\sigma(P_1) | \hat S_i$ is larger than $\eps$. As those vertices lie in the same component of $G-S_i$ (namely, in $C(S_i,\omega)$), there is an outer edge of $\hat S_i$ between them. This edge is shorter than $\eps$ and thus a shortcut of $C_i$, contradicting the fact that $C_i$ is $\ell_i$-geodetic.

  Thus, $\sigma$ is injective. As any bijective, continuous map between a compact space and a Hausdorff space is a homeomorphism, $C$ is a circle.

  Suppose, for contradiction, there is a shortcut $P$ of $C$ between points $x,y\in C\cap \hat{V}$. Choose $\eps>0$ such that $P$ is shorter by at least $3\eps$ than both $x$--$y$~arcs on $C$. Then, there is an $i$ such that the restrictions $Q_1,Q_2$ of the $x$--$y$~arcs on $C$ to $\hat S_i$ are longer by at least $2\eps$ than $P_i\assign P | \hat S_i$ ($Q_1,Q_2$ lie in $C_i$ by the definition of $\sigma$, but note that they may have different endpoints). By Corollary~\ref{cor:outeredges}, we may again assume that every outer edge of $\hat S_i$ is shorter than $\eps$. If $x$ does not lie in $\hat S_i$, then the first vertices of $P_i$ and $Q_1$ lie in the component of $G-S_i$ that contains $x$ (or one of its rays if $x$ is an end). The same is true for $y$ and the last vertices of $P_i$ and $Q_1$. Thus, we may extend $P_i$ to a path $P'_i$ with the same endpoints as $Q_1$, by adding to it at most two outer edges of $\hat S_i$. But $P'_i$ is then shorter than both $Q_1$ and $Q_2$, in contradiction to the fact that $C_i$ is $\ell_i$-geodetic. Thus there is no shortcut to $C$ and therefore it is $\ell$-geodetic.
\end{proof}

\subsection{Proof of the generating theorem}
\label{subsec:proof:thm}

Before we are able to prove Theorem~\ref{thm:generating}, we need one last lemma.

\begin{lemma}
  \label{lemma:decompose:shortcycles}
  Let $C$ be a circle in $|G|$ and let $i\in\N$ be minimal such that $C$ meets $S_i$. Then, there exists a finite family $\mathcal F$, each element of which is an $\ell$-geodetic circle in $|G|$ of length at most $5\eps_i$, and such that $\sum \mathcal F$ coincides with $C$ in $\tilde S_i$, that is, $(\sum \mathcal F)\cap \tilde S_i= C\cap \tilde S_i$.
\end{lemma}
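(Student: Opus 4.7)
My plan is to apply \Lr{lemma:shortcycles} at every level $n\ge i$ and then use K\"onig's Lemma to extract, from each ``stable'' level-$i$ geodetic cycle, a chain whose limit is an $\ell$-geodetic circle in $|G|$. For each $n\ge i$ set $C_n\assign C|\hat S_n$; since $C$ meets $S_i$ and nothing earlier, the same holds for $C_n$, so \Lr{lemma:shortcycles} supplies a finite family $\mathcal F_n$ of $\ell_n$-geodetic cycles in $\hat S_n$, each of $\ell_n$-length at most $5\eps_i$, summing to $C_n$. By \Lr{lemma:restriction} and \Lr{lemma:shorter} every non-empty restriction $A|\hat S_i$ of an $A\in\mathcal F_n$ belongs to the \emph{finite} set $\mathcal G$ of $\ell_i$-geodetic cycles in $\hat S_i$ of $\ell_i$-length at most $5\eps_i$. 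Let $\Phi(\mathcal F_n)\subseteq\mathcal G$ collect those $D\in\mathcal G$ with $|\{A\in\mathcal F_n:A|\hat S_i=D\}|$ odd; pigeonhole over the finite set of subsets of $\mathcal G$ then yields an infinite $N\subseteq\NN$ and a single subset $\Phi=\{D_1,\dotsc,D_k\}$ with $\Phi(\mathcal F_n)=\Phi$ for every $n\in N$.

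This choice of $\Phi$ guarantees that $\sum_{j=1}^k D_j$ already matches $C$ on $\tilde S_i$. Indeed, fix any $n\in N$: by the very definition of $\Phi(\mathcal F_n)$, the equality $\sum_{j=1}^k D_j=\sum_{A\in\mathcal F_n}A|\hat S_i$ holds in the cycle space of $\hat S_i$. Intersecting both sides with $E(\tilde S_i)$ and using that every cycle restriction agrees with the original cycle on edges of $\tilde S_i$ (outer edges of $\hat S_i$ lie outside $\tilde S_i$), this becomes $\bigl(\sum_j D_j\bigr)\cap E(\tilde S_i)=\bigl(\sum\mathcal F_n\bigr)\cap E(\tilde S_i)=C_n\cap E(\tilde S_i)=C\cap E(\tilde S_i)$. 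Hence it suffices to construct, for each $D_j\in\Phi$, an $\ell$-geodetic circle $\tilde D_j$ in $|G|$ of length at most $5\eps_i$ with $\tilde D_j\cap\tilde S_i=D_j\cap\tilde S_i$; then $\mathcal F\assign\{\tilde D_1,\dotsc,\tilde D_k\}$ is the required finite family.

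Fix $j$ and, for each $m\ge i$, let $B_m$ be the finite set of $\ell_m$-geodetic cycles $D'$ in $\hat S_m$ of $\ell_m$-length at most $5\eps_i$ with $D'|\hat S_i=D_j$ (so $B_i=\{D_j\}$). Join each $D'\in B_{m+1}$ to $D'|\hat S_m$; by \Lr{lemma:restriction}, \Lr{lemma:shorter}, and the straightforward fact that restrictions compose, this target lies in $B_m$. Each $B_m$ is non-empty, because for any $n\in N$ with $n\ge m$ some $A_n\in\mathcal F_n$ satisfies $A_n|\hat S_i=D_j$, and then $A_n|\hat S_m\in B_m$. K\"onig's Infinity Lemma (\Lr{lemma:infinity}) applied to the sequence $(B_m)_{m\ge i}$ therefore yields a chain $D_j=D_j^{(i)},D_j^{(i+1)},D_j^{(i+2)},\dotsc$ of $\ell_m$-geodetic cycles, so by \Lr{lemma:limitgeo} its limit $\tilde D_j$ is an $\ell$-geodetic circle in $|G|$. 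The sets $E(D_j^{(m)})\cap E(\tilde S_m)$ form an increasing union whose edges have the same length in $\ell$ as in $\ell_m$, giving $\ell(\tilde D_j)=\lim_m\ell_m(D_j^{(m)}\cap\tilde S_m)\le 5\eps_i$.

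Finally, for $e\in E(\tilde S_i)$ and any $m\ge i$, the definition of restriction gives $e\in D_j^{(m)} \Leftrightarrow e\in D_j^{(m)}|\hat S_i=D_j$; so $e\in\tilde D_j \Leftrightarrow e\in D_j$, and $\tilde D_j\cap\tilde S_i=D_j\cap\tilde S_i$ for every $j$. Combining with the second paragraph, $(\sum\mathcal F)\cap\tilde S_i=C\cap\tilde S_i$, as required. I expect the main difficulty to be the K\"onig step: the families $\mathcal F_n$ produced by separate applications of \Lr{lemma:shortcycles} are a priori unrelated across levels, and it is only the pigeonhole argument --- restricting each $\mathcal F_n$ to $\hat S_i$ and noting that these restrictions land in the \emph{finite} set of subsets of $\mathcal G$ --- that produces the common skeleton $\Phi$ enabling K\"onig to extract one infinite chain per $D_j\in\Phi$.
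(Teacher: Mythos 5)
Your proof is correct and follows essentially the same route as the paper: restrict $C$ to each $\hat S_n$, decompose via \Lr{lemma:shortcycles}, use K\"onig's Infinity Lemma to extract chains of $\ell_n$-geodetic cycles of length at most $5\eps_i$, and pass to $\ell$-geodetic limit circles via \Lr{lemma:limitgeo}. The only organisational difference is that the paper applies K\"onig's Lemma once, to the sets $V_j$ of \emph{minimal} families of $\ell_j$-geodetic cycles whose sum matches $C$ on $\tilde S_i$ (so that coherence of whole families across levels comes for free), whereas you first pigeonhole the level-$i$ restrictions to fix the set $\Phi$ and then run K\"onig separately for each $D_j\in\Phi$; both versions rest on the same true but unproved fact that restrictions compose, which the paper also uses implicitly (e.g.\ in the proof of Proposition~\ref{lemma:shortestarc}).
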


\begin{proof}
  For every $j\ge i$, choose, among all families $\mathcal H$ of $\ell_j$-geodetic cycles in $\hat S_j$, the ones that are minimal with the following properties, and let $V_j$ be their set:
  \begin{itemize}
    \item
      no cycle in $\mathcal H$ is longer than $5\eps_i$ in $\hat S_j$, and
    \item
      $\sum \mathcal H$ coincides with $C$ in $\tilde S_i$.
  \end{itemize}
  Note that every cycle in such a family $\mathcal H$ meets $S_i$ as otherwise $\mathcal H$ would not be minimal with the above properties. By \Lr{lemma:shortcycles}, the sets $V_j$ are not empty. As no family in $V_j$ contains a cycle twice, and $\hat S_j$ has only finitely many cycles, every $V_j$ is finite. Our aim is to find a family $\CCC_i\in V_i$ so that each cycle in $\CCC_i$ can be extended to an $\ell$-geodetic circle, giving us the desired family $\mathcal F$.

  Given $j\ge i$ and $\CCC\in V_{j+1}$, restricting every cycle in $\CCC$ to $\hat S_j$ (note that by the minimality of $\CCC$, every cycle in $\CCC$ meets $S_i$ and hence also its supset $S_j$) yields, by \Lr{lemma:restriction}, a family $\CCC^-$ of $\ell_j$-geodetic cycles. Moreover, $\CCC^-$ lies in $V_j$: by \Lr{lemma:shorter}, no element of $\CCC^-$ is longer than $5\eps_i$, and the sum of $\CCC^-$ coincides with $C$ in $\hat S_i$, as the performed restrictions do not affect the edges in $\tilde S_i$. In addition, $\CCC^-$ is minimal with respect to the above properties as $\CCC$ is.

  Now construct an auxiliary graph with vertex set $\bigcup_{j\geq i} V_j$, where for every $j>i$, every element $\CCC$ of $V_j$ is incident with $\CCC^-$. Applying \Lr{lemma:infinity} to this graph, we obtain an infinite sequence $\CCC_i,\CCC_{i+1},\dotsc$ such that for every $j\ge i$, $\CCC_j\in V_j$ and $\CCC_j = \CCC_{j+1}^-$. Therefore, for every cycle $C\in\CCC_j$ there is a unique cycle in $\CCC_{j+1}$ whose restriction to $\hat S_j$ is $C$. Hence for every $D\in\CCC_i$ there is a chain $(D=)D_i,D_{i+1},\dotsc$ of cycles such that $D_j\in\CCC_j$ for every $j\ge i$. By \Lr{lemma:limitgeo}, the limit $X_D$ of this chain is an $\ell$-geodetic circle, and $X_D$ is not longer than $5\eps_i$, because in that case some $D_j$ would also be longer than $5\eps_i$. Thus, the family $\mathcal F$ resulting from $\CCC_i$ after replacing each $D\in\CCC_i$ with $X_D$  has the desired properties.
\end{proof}

\begin{proof}[Proof of Theorem~\ref{thm:generating}]
  If $(\mathcal F_i)_{i \in I}$ is a family of families, then let the family $\famun_{i \in I} \mathcal F_i$ be the disjoint union of the families $\mathcal F_i$. Let $C$ be an element of $\CCC(G)$. For $i=0,1,\dotsc$, we define finite families $\Gamma_i$ of $\ell$-geodetic circles that satisfy the following condition:
  \begin{equation} \label{enum:disjoint}
  \text{$C_i\assign C+\sum \famun_{j\leq i} \Gamma_j$ does not contain edges of $\tilde S_i$,}
  \end{equation}
  where $+$ denotes the symmetric difference.

  By \Lr{pro:cyclespace:circles}, there is a family $\CCC$ of edge-disjoint circles whose sum equals $C$. Applying \Lr{lemma:decompose:shortcycles} to every circle in $\CCC$ that meets $S_0$ (there are only finitely many), yields a finite family $\Gamma_0$ of $\ell$-geodetic circles that satisfies condition~\eqref{enum:disjoint}.

  Now recursively, for $i=0,1,\ldots$, suppose that $\Gamma_0,\ldots,\Gamma_i$ are already defined finite families of $\ell$-geodetic circles satisfying condition~\eqref{enum:disjoint}, and write $C_i$ as a sum of a family $\CCC$ of edge-disjoint circles, supplied by \Lr{pro:cyclespace:circles}. Note that only finitely many members of $\CCC$ meet $S_{i+1}$, and they all avoid $S_i$ as $C_i$ does. Therefore, for every member $D$ of $\CCC$ that meets $S_{i+1}$, \Lr{lemma:decompose:shortcycles} yields a finite family $\mathcal F_D$ of $\ell$-geodetic circles of length at most $5\eps_{i+1}$ such that $(\sum \mathcal F_D)\cap \tilde{S}_{i+1}=D\cap \tilde{S}_{i+1}$. Let \begin{equation*}
  	\Gamma_{i+1} \assign \famun_{\stackrel{D\in \CCC}{D \cap S_{i+1}\neq \emptyset}} \mathcal F_D.
  \end{equation*}
  By the definition of $C_i$ and $\Gamma_{i+1}$, we have
  \begin{equation*}
    C_{i+1}=C+\sum \famun_{j\leq i+1} \Gamma_j = C + \sum \famun_{j\leq i} \Gamma_j + \sum \Gamma_{i+1} =C_i+\sum \Gamma_{i+1}.
  \end{equation*}
  By the definition of $\Gamma_{i+1}$, condition~\eqref{enum:disjoint} is satisfied by $C_{i+1}$ as it is satisfied by $C_i$. Finally, let
  \begin{equation*}
    \Gamma:=\famun_{i<\omega}\Gamma_i.
  \end{equation*}
  Our aim is to prove that $\sum \Gamma=C$, so let us first show that $\Gamma$ is thin.

  We claim that for every edge $e\in E(G)$, there is an $i\in\N$, such that for every $j\ge i$ no circle in $\Gamma_j$ contains $e$. Indeed, there is an $i\in\N$, such that $\eps_j$ is smaller than $\frac15 \ell(e)$ for every $j\ge i$. Thus, by the definition of the families $\Gamma_j$, for every $j\ge i$, every circle in $\Gamma_j$ is shorter than $\ell(e)$, and therefore too short to contain $e$. This proves our claim, which, as every $\Gamma_i$ is finite, implies that $\Gamma$ is thin.

  Thus, $\sum \Gamma$ is well defined; it remains to show that it equals $C$. To this end, let $e$ be any edge of $G$. By \eqref{enum:disjoint} and the claim above, there is an $i$, such that $e$ is contained neither in $C_i$ nor in a circle in $\famun_{j>i} \Gamma_i$. Thus, we have
  \begin{equation*}
    e\notin C_i+\sum \famun_{j>i} \Gamma_j =C+\sum \Gamma.
  \end{equation*}
  As this holds for every edge $e$, we deduce that $C+\sum \Gamma =\emptyset$, so $C$ is the sum of the family $\Gamma$ of $\ell$-geodetic circles.
\end{proof}

\section{Further problems}
\label{sec:further}
It is known that the finite circles (i.e.\ those containing only finitely many edges) of a locally finite graph $G$ generate $\CCC(G)$ (see \cite[Corollary 8.5.9]{diestelBook05}). In the light of this result and \Tr{thm:generating}, it is natural to pose the following question:

\begin{problem}
\label{pr:fin}
Let $G$ be a locally finite graph, and consider a metric representation $(|G|,\ell)$ of $G$. Do the finite $\ell$-geodetic circles generate $\CCC(G)$?
\end{problem}

The answer to Problem~\ref{pr:fin} is negative: Figure~\ref{fig:bsp:infinitecycles} shows a graph with a metric representation where no geodetic circle is finite.

In \Sr{sec:notation} we did not define $d_{\ell}(x,y)$ as the length of a shortest $x$--$y$~arc, because we could not guarantee that such an arc exists. But does it? The following result asserts that it does.

\begin{proposition}
  \label{lemma:shortestarc}
  Let a metric representation $(|G|,\ell)$ of a locally finite graph $G$ be given and write $d=d_{\ell}$. For any two distinct points $x,y\in \hat{V}$, there exists an $x$--$y$~arc in $|G|$ of length $d(x,y)$.
\end{proposition}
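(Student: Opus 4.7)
The plan is to extract, from a minimising sequence of $x$--$y$~arcs, a chain of paths in the auxiliary graphs $\hat S_i$ of \Sr{subsec:proof:restricting} whose limit contains the desired shortest arc. Set $L\assign d(x,y)$, which is finite since $(|G|,\ell)$ is a metric representation, and choose $x$--$y$~arcs $A_n$ in $|G|$ with $\ell(A_n)\to L$. Because $x$ and $y$ are distinct points of $\hat V$, for all sufficiently large $i$ every such arc must meet $S_i$, so the restriction $A_n|\hat S_i$ is a path in the finite graph $\hat S_i$. As $\hat S_i$ contains only finitely many paths, a standard diagonal extraction yields a subsequence $(A_{n_k})$ such that, for every $i$, the path $A_{n_k}|\hat S_i$ is eventually constant in $k$, equal to some path $Q_i$; transitivity of restriction gives $Q_i=Q_{i+1}|\hat S_i$, so $(Q_i)$ is a chain of paths.

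By \Lr{lemma:continuous}, the limit $X$ of this chain is the image of a topological path $\sigma\colon [0,1]\to |G|$. Let $x_i,y_i$ denote the endpoints of $Q_i$. If $x$ is a vertex, then $x_i=x$ for all sufficiently large $i$; if $x$ is an end, then the arcs $A_{n_k}$ approach $x$ through arbitrarily small neighbourhoods, forcing $x_i$ to lie in the basic open neighbourhood $C(S_i,x)\cup\Omega(S_i,x)\cup E'(S_i,x)$ of $x$, so $x_i\to x$ in $(|G|,d)$. The same reasoning gives $y_i\to y$. Inspection of the construction in \Lr{lemma:continuous} (adapted by setting $\sigma(0)\assign \lim x_i$ and $\sigma(1)\assign \lim y_i$ when the endpoint sequences fail to stabilise) shows that $\sigma(0)=x$ and $\sigma(1)=y$, so \Lr{arc} provides an $x$--$y$~arc $A\subseteq X$.

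It remains to prove $\ell(A)\le L$. Let $E_i$ be the set of non-outer edges of $Q_i$, i.e.\ the edges of $Q_i$ lying in $\tilde S_i$. Since $(Q_i)$ is a chain, $E_i\subseteq E_{i+1}$, and as $\tilde X=\bigcup_i E_i$ consists of entire edges of $G$ we have
\[
  \ell(X)=\sum_{e\in\bigcup_i E_i}\ell(e)=\sup_i \sum_{e\in E_i}\ell(e)\le \sup_i \ell_i(Q_i).
\]
For each $i$ and each sufficiently large $k$ we have $Q_i=A_{n_k}|\hat S_i$, so \Lr{lemma:shorter} gives $\ell_i(Q_i)\le \ell(A_{n_k})$; letting $k\to\infty$ yields $\ell_i(Q_i)\le L$, and hence $\ell(X)\le L$. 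Thus $\ell(A)\le \ell(X)\le L$, while the reverse inequality $\ell(A)\ge d(x,y)=L$ is immediate from the definition of $d_\ell$.

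The main obstacle is the case when $x$ or $y$ is an end: one must verify that the varying endpoints $x_i,y_i$ of $Q_i$ converge to $x,y$ in the metric $d$, and that the topological path supplied by (an adaptation of) \Lr{lemma:continuous} does start at $x$ and end at $y$. This is exactly where compatibility of $d$ with the Freudenthal topology on $|G|$ is crucial.
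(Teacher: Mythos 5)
Your proposal is correct and follows essentially the same route as the paper's own proof: extract from a minimising sequence of $x$--$y$~arcs, by pigeonhole over the finitely many paths in each $\hat S_i$, a chain of restricted paths; pass to the limit via \Lr{lemma:continuous} and \Lr{arc}; and bound the length using \Lr{lemma:shorter}. Your more explicit treatment of the endpoint convergence when $x$ or $y$ is an end, and the direct length estimate $\ell_i(Q_i)\le\ell(A_{n_k})$ in place of the paper's argument by contradiction, are only cosmetic differences.
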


\begin{proof}
  Let $\PPP=P_0,P_1,\dotsc$ be a sequence of $x$--$y$~arcs in $|G|$ such that $(\ell(P_j))_{j\in\N}$ converges to $d(x,y)$. Choose a $j\in\N$ such that every arc in $\PPP$ meets $S_j$, where $S_j$, $\tilde S_j$, and $\hat S_j$ are defined as before. Such a $j$ always exists; if for example $x,y \in \OO$, then pick $j$ so that $S_j$ separates a ray in $x$ from a ray in $y$.

  As $\hat S_j$ is finite, there is a path $X_j$ in $\hat S_j$ and a subsequence $\PPP_j$ of $\PPP$ such that $X_j$ is the restriction of any arc in $\PPP_j$ to $\hat S_j$. Similarly, for every $i>j$, we can recursively find a path $X_i$ in $\hat S_i$ and a subsequence $\PPP_i$ of $\PPP_{i-1}$ such that $X_i$ is the restriction of any arc in $\PPP_i$ to $\hat S_i$.

  By construction, $X_j,X_{j+1},\dotsc$ is a chain of paths. The limit $X$ of this chain contains $x$ and $y$ as it is closed, and $\ell(X)\leq d(x,y)$; for if $\ell(X)> d(x,y)$, then there is an $i$ such that $\ell(X_i\cap \tilde S_i)> d(x,y)$, and as $\ell(X_k\cap \tilde S_k)\ge\ell(X_i\cap \tilde S_i)$ for $k>i$, this contradicts the fact that $(\ell(P_j))_{j\in\N}$ converges to $d(x,y)$. By \Lr{lemma:continuous}, $X$ is the image of a topological path and thus, by \Lr{arc}, contains an $x$--$y$~arc $P$. Since $P$ is at most as long as $X$, it has length $d(x,y)$ (thus as $\ell(X)\le d(x,y)$, we have $P=X$.)
\end{proof}

Our next problem raises the question of whether it is possible, given $x,y \in V(G)$, to approximate $d(x,y)$ by finite $x$--$y$~paths:

\begin{problem}
Let $G$ be a locally finite graph, consider a metric representation $(|G|,\ell)$ of $G$ and write $d=d_{\ell}$. Given $x,y \in V(G)$ and $\epsilon \in \R_+$, is it always possible to find a finite $x$--$y$~path $P$ such that $\ell(P) - d(x,y) <\epsilon$?
\end{problem}

Surprisingly, the answer to this problem is also negative. The graph of Figure~\ref{fig:bsp:infinitecycles} with the indicated metric representation is again a counterexample.

\showFig{GeoUnendlich}{fig:bsp:infinitecycles}{A $1$-ended graph $G$ with a metric representation. Every $\ell$-geodetic circle is easily seen to contain infinitely many edges. Moreover, every (graph-theoretical) $x$--$y$~path has length at least $4$, although $d(x,y)=2$.}

As noted in \Sr{sec:proof}, every $\ell$-geodetic circle has finite length. But what about other circles? Is it possible to choose a metric representation such that there are circles of infinite length? Yes it is, 
Figure~\ref{fig:bsp:infinitecycles} shows such a metric representation. It is even possible to have every infinite circle have infinite length: 
Let $G$ be the infinite ladder, let the edges of the upper ray have lengths $\frac{1}{2}, \frac{1}{4}, \frac{1}{8}, \ldots$, let the edges of the 
lower ray have lengths $\frac{1}{2}, \frac{1}{3}, \frac{1}{4}, \ldots$, and let the rungs have lengths $\frac{1}{2}, \frac{1}{4}, \frac{1}{8}, \ldots$.
This clearly yields a metric representation, and as any infinite circle contains a tail of the lower ray, it has infinite length. This means that in this example all $\ell$-geodetic circles are finite, contrary to the metric representation in Figure~\ref{fig:bsp:infinitecycles}, where every $\ell$-geodetic circle is infinite.

Theorems~\ref{thm:fin:L} and~\ref{thm:generating} could be applied in order to prove that the cycle space of a graph is generated by certain subsets of its, by choosing an appropriate length function, as indicated by our next problem.  Call a cycle in a finite graph \emph{peripheral}, if it is induced and non-separating.

\begin{problem}
\label{prob:tut}
If $G$ is a $3$-connected finite graph, is there an assignment of lengths $\ell$ to the edges of $G$, such that every $\ell$-geodetic cycle is peripheral?
\end{problem}

We were not able to give an answer to this problem. A positive answer would imply, by \Tr{thm:fin:L}, a classic theorem of Tutte \cite{tutte63}, asserting that the peripheral cycles of a $3$-connected finite graph generate its cycle space. Problem~\ref{prob:tut} can also be posed for infinite graphs, using the infinite counterparts of the concepts involved\footnote{Tutte's theorem has already been extended to locally finite graphs by Bruhn \cite{locFinTutte}}.

\section*{Acknowledgement}
The counterexamples in Figures~\ref{fig:bsp:classic} and~\ref{fig:bsp:infinitecycles} are due to Henning Bruhn. The authors are indebted to him for these counterexamples and other useful ideas.

\bibliographystyle{plain}
\bibliography{collective}

\begin{thebibliography}{10}

\bibitem{armstrong}
M.A. Armstrong.
\newblock {\em Basic Topology}.
\newblock Springer-Verlag, 1983.

\bibitem{locFinTutte}
H.~Bruhn.
\newblock The cycle space of a $3$-connected locally finite graph is generated
  by its finite and infinite peripheral circuits.
\newblock {\em J.~Combin.\ Theory (Series B)}, 92:235--256, 2004.

\bibitem{duality}
H.~Bruhn and R.~Diestel.
\newblock Duality in infinite graphs.
\newblock {\em Comb.,\ Probab.\ Comput.}, 15:75--90, 2006.

\bibitem{partition}
H.~Bruhn, R.~Diestel, and M.~Stein.
\newblock Cycle-cocycle partitions and faithful cycle covers for locally finite
  graphs.
\newblock {\em J.~Graph Theory}, 50:150--161, 2005.

\bibitem{locFinMacLane}
H.~Bruhn and M.~Stein.
\newblock Mac{L}ane's planarity criterion for locally finite graphs.
\newblock {\em J.~Combin.\ Theory (Series B)}, 96:225--239, 2006.

\bibitem{cyclesIntro}
R.~Diestel.
\newblock The cycle space of an infinite graph.
\newblock {\em Comb.,\ Probab.\ Comput.}, 14:59--79, 2005.

\bibitem{diestelBook05}
R.~Diestel.
\newblock {\em Graph {T}heory \emph{(3rd edition)}}.
\newblock Springer-Verlag, 2005.
\newblock \\ Electronic edition available at:\\ {\small\tt
  http://www.math.uni-hamburg.de/home/diestel/books/graph.theory}.

\bibitem{diestelESST}
R.~Diestel.
\newblock End spaces and spanning trees.
\newblock {\em J.~Combin.\ Theory (Series B)}, 96:846--854, 2006.

\bibitem{ends}
R.~Diestel and D.~K{\"u}hn.
\newblock Graph-theoretical versus topological ends of graphs.
\newblock {\em J.~Combin.\ Theory (Series B)}, 87:197--206, 2003.

\bibitem{cyclesI}
R.~Diestel and D.~K{\"u}hn.
\newblock On infinite cycles {I}.
\newblock {\em Combinatorica}, 24:68--89, 2004.

\bibitem{cyclesII}
R.~Diestel and D.~K{\"u}hn.
\newblock On infinite cycles {II}.
\newblock {\em Combinatorica}, 24:91--116, 2004.

\bibitem{tst}
R.~Diestel and D.~K{\"u}hn.
\newblock Topological paths, cycles and spanning trees in infinite graphs.
\newblock {\em Europ.\ J.\ Combinatorics}, 25:835--862, 2004.

\bibitem{Freudenthal31}
H.~Freudenthal.
\newblock \"{U}ber die {E}nden topologischer {R}{\"a}ume und {G}ruppen.
\newblock {\em Math.\ Zeitschr.}, 33:692--713, 1931.

\bibitem{ltop}
A.~Georgakopoulos.
\newblock Graph topologies induced by edge lengths.
\newblock Preprint 2009.

\bibitem{hotchpotch}
A.~Georgakopoulos.
\newblock Topological circles and {E}uler tours in locally finite graphs.
\newblock {\em Electronic J.\ Comb.}, 16:{\#\/R40}, 2009.

\bibitem{elemTop}
D.W. Hall and G.L. Spencer.
\newblock {\em Elementary Topology}.
\newblock John Wiley, New York 1955.

\bibitem{InfLemma}
D.~K\"onig.
\newblock {\em Theorie der endlichen und unendlichen Graphen}.
\newblock Akademische Verlagsgesellschaft, 1936.

\bibitem{tutte63}
W.T. Tutte.
\newblock How to draw a graph.
\newblock {\em Proc.\ London Math.\ Soc.}, 13:743--768, 1963.

\bibitem{vvv}
A.~Vella.
\newblock {\em A fundamentally topological perspective on graph theory, {PhD}
  thesis}.
\newblock Waterloo, 2004.

\end{thebibliography}

\small
\vskip2mm plus 1fill
\parindent=0pt\obeylines

\bigbreak
Agelos Georgakopoulos {\tt <georgakopoulos@math.uni-hamburg.de>}
Philipp Spr\"ussel {\tt <philipp.spruessel@gmx.de>}
\smallskip
Mathematisches Seminar
Universit\"at Hamburg
Bundesstra\ss e 55
20146 Hamburg
Germany

\end{document}